\newtheorem{coro}{{Corollary}}
\newtheorem{exa}{{ Example}}
\newtheorem{teo}{Theorem}
\newtheorem{pro}{ Proposition }
\numberwithin{equation}{section}
\date{\today}
\title{Multiple Geronimus transformations}
\author{M. Derevyagin }
\address{M. Derevyagin\\
Department of Mathematics\\
KU Leuven\\
Celestijnenlaan 200B box 2400\\
BE-3001 Leuven (Heverlee)\\
Belgium}
\email{derevyagin.m@gmail.com}
\author{J. C. Garc\'ia-Ardila}
\address{J. C. Garc\'ia-Ardila\\
Departamento de Matem\'aticas\\
Universidad Carlos III de Madrid\\
Avenida de la Universidad 30\\
28911 Legan\'es\\
 Spain}
\email{jugarcia@math.uc3m.es}
\author{F. Marcell\'an}
\address{F. Marcell\'an\\
Instituto de Ciencias Matem\'aticas (ICMAT) and  Departamento de Matem\'aticas\\
 Universidad Carlos III de Madrid\\
 Avenida de la Universidad 30\\
 28911 Legan\'es\\
  Spain}
\email{pacomarc@ing.uc3m.es}
\begin{document}

\begin{abstract}
 We consider multiple Geronimus transformations and show that they lead to
discrete (non-diagonal) Sobolev type  inner products. Moreover, it is shown that every discrete Sobolev inner product
can be obtained as a multiple Geronimus transformation. A connection with Geronimus spectral transformations for matrix orthogonal polynomials is also considered.\\

\textbf{2010 Mathematics Subject Classification:}Primary 42C05; Secondary 15A23.\\

\textbf{Keywords and Phrases:} Orthogonal polynomials, Geronimus transformation, Sobolev inner products, Cholesky decomposition, Jacobi matrix, band matrix.

\end{abstract}
\maketitle

\section{Introduction}
In this paper we basically study one of the classical problems in the theory of orthogonal polynomials  that goes back to the work of
Fej\'
er \cite{Fejer} and Shohat \cite{Sho}. It can be stated as follows. Given a nontrivial probability measure $\mu$ supported on an interval of the real line, consider the corresponding sequence of monic orthogonal polynomials $\{P_n (t) \}_{n\geq0}$. Then the problem is to find out when the sequence $\{Q_n \}_{n\geq0}$ of monic polynomials

\begin{equation}\label{GenOP}
Q_n(t):=P_n(t)+A_1^{[n]}P_{n-1}(t)+\cdots +A_{N}^{[n]}P_{n-N}(t)
\end{equation}
with real numbers $A_1^{[n]},\cdots,A_{N}^{[n]}$,  $A_{N}^{[n]}\neq0,$ and $P_{-i}=0$, for $i=1,\dots, N,$ is a family of monic orthogonal polynomials with respect to some probability measure $\nu$ supported
on an interval of the real line.

This problem has not been fully understood up until now and in the present paper we give the most thorough answer by demonstrating that in the general situation such families $\{Q_n(t)\}_{n\geq0}$ could lead to Sobolev type orthogonal polynomials, which were introduced  in a general framework in the early nineties; see \cite{MR90} and \cite{AMRR92}.
Nevertheless, few years after the Shohat publication a complete answer to the particular case of the problem, when
$$Q_n(t):=P_n(t)+A_1^{[n]}P_{n-1}(t),$$
was given by Geronimus in \cite{Ge}, providing a way to generate new families of orthogonal polynomials.
Since even nowadays it is not so easy to get access to \cite{Ge} and it is only accessible in Russian, the paper by Geronimus remained
unnoticed until the work on discrete-time Toda and Volterra lattices by Spiridonov and Zhedanov \cite{Spi}, \cite{Spir}, where they called the new family of orthogonal polynomials $\{Q_n(t)\}_{n\geq0}$ the Geronimus transformation of $\{P_n(t) \}_{n\geq0}$ (see also \cite{Zhe}).
Later on, a more general framework to the Geronimus transformation and its inverse, the Christoffel transformation,
was given in \cite{BM04}, \cite{Yoo}. In this framework both the transformations are called Darboux transformations because they are related to $UL$- and $LU$-factorization of Jacobi matrices and are discrete analogs of the famous B\"acklund-Darboux transformations from the theory of integrable systems.

To get a basic idea about \cite{BM04},\cite{Yoo}, let us consider a linear functional $\sigma$ on the linear space $\mathbb{P}$ of polynomials with real coefficients. Next, we denote by $\left<\sigma, p\right>$ the image of $p \in \mathbb{P}$ by the linear functional $\sigma$. We define the moments of such a linear functional by $\sigma_n:=\left<\sigma,t^n\right>$. In addition, for  polynomials $p$ and $\phi,$  we can introduce new linear functionals  as follows (see \cite{Yoo})

$$\left<\phi\sigma(t),p(t)\right>=\left<\phi,\sigma(t) p(t)\right>  \ \ \ \ \ \ \ \ \ \text{and}\ \ \ \ \ \ \ \ \left<(t-a)^{-1}\sigma,\phi(t)\right>=\left<\sigma,\frac{\phi(t)-\phi(a)}{t-a}\right> .$$
The canonical Geronimus transformation of the linear functional $\sigma$ corresponding to the Geronimus transformation of orthogonal polynomials can be defined as the linear functional $\hat{\sigma}$ such that

\begin{equation}\label{functional}
\hat{\sigma}=(t-a)^{-1}\sigma + \hat{\sigma}_0\delta(t-a).
\end{equation}
 Notice that the constant $\hat{\sigma}_0$ is an arbitrary real number. In the particular case $\left<\sigma,f\right>= \int_I{f d\mu_0}$ where $\mu_0$ is a nontrivial probability  measure and $a=0$ in \eqref{functional}, we have

$$\left<\hat{\sigma},fg\right>=\int_I fg d\mu +\left(\hat{\sigma}_0-\int_I d\mu\right)f(0)g(0)$$
where  $td\mu=d\mu_0$. Unfortunately, this approach doesn't allow us to deal with our main problem in the full generality and we have to go on.

In order to move to the next level of understanding of the problem in question and, so, the Geronimus transformation, we need to reconsider everything in the context of symmetric bilinear forms \cite{BvB91}, \cite{Dere}, \cite{Dur}.
To this end, let us recall that a symmetric bilinear form $B$ is a mapping $B:\mathbb{P}\times\mathbb {P}\to\mathbb{R} $  which is linear in each of its arguments and satisfies

$$B(f,g)=B(g,f).$$
As a consequence, we can associate with every symmetric bilinear form a Gram matrix $ \left(B(t^i,t^j)\right)_{i,j=0}^{\infty}:=\left(u_{i,j}\right)_{i,j=0}^{\infty}.$
If a bilinear form is given by $B(f, g) = \left <\sigma, fg\right>$, then the corresponding Gram matrix is a Hankel  matrix.

 A symmetric bilinear form is said to be quasi-definite (resp. positive definite) if the leading principal submatrices of the Gram matrix are nonsingular (resp. with determinant greater than zero). In this case the symmetric bilinear form  generates a  sequence  of orthogonal polynomials. In fact, these polynomials can be  written as follows
\begin{equation*}
P_n(t)=
\begin{vmatrix}
\mu_{0,0}&\cdots&\mu_{0,n}\\
\vdots&\vdots&\vdots\\
 \mu_{n-1,0}&\cdots&\mu_{n-1,n}\\
 1&\cdots&t^n
 \end{vmatrix}.
\end{equation*}
The interest in considering symmetric bilinear forms in general comes from the circumstances that the associated Gram matrix does not have the structure of a Hankel matrix that appears when you deal with symmetric bilinear forms associated with linear functionals. Thus it allows us to consider some different kinds of orthogonality  like the discrete Sobolev one which has attracted the attention of many authors (see \cite{Dur}, \cite{Eva}
to name a few). In this framework, it is quite natural to define the Geronimus transformation as follows (see \cite{Dere}, \cite{Ge}). 
For a nontrivial probability measure $\mu_{0}$ supported on an infinite subset $I$ of the real line, let us introduce an associated symmetric bilinear form defined on the linear space of polynomials $\mathbb{P}$ as
$$(f(t),g(t))_0=\int_{I} f(t)g(t)d\mu_0(t).$$
The Geronimus transformation of $(\cdot,\cdot)_0$ is the symmetric bilinear form  $[\cdot,\cdot]_1$ given by the formula 
$$[tg(t),f(t)]_1=[g(t),tf(t)]_1=\int_I f(t)g(t)d\mu_0(t).$$
Moreover, in \cite{Dere} the definition of the Geronimus transformation was extended to the case of the polynomial $h(t)=t^2$, and
the corresponding orthogonal polynomials and band matrices were studied there. In this case the transformation is called the double Geronimus transformation and it is associated with the family
$$Q_n(t):=P_n(t)+A_1^{[n]}P_{n-1}(t)+A_{2}^{[n]}P_{n-2}(t).$$

In this paper we start with an arbitrary polynomial $h$ of degree $\deg h=N$. Then following \cite{Dere}  we
define {\it a multiple Geronimus transformation} as 
$$[h(t)g(t),f(t)]_h=[g(t),h(t)f(t)]_h=\int_I f(t)g(t)d\mu_0(t).$$
Now we are in a position to pose the following natural question: what can be said about the symmetric bilinear form $[\cdot,\cdot]_h$
and related orthogonal polynomials, which turn out to be of the form \eqref{GenOP}? This problem is also motivated by Dur\'an in \cite{Dur},  where general results are given for symmetric bilinear forms such that  the operator of multiplication by $h$ is symmetric with respect to the bilinear form, i.e. $B (hf, g)=B (f, gh)$. Note that our Proposition 2  is a specific case of the Lemma 3 given in \cite{Dur}.

In a word, the main idea of the present paper is to show that $[\cdot,\cdot]_h$ is a discrete Sobolev inner product and to explain the structure of the band matrices generated by the recurrence relations for Sobolev type orthogonal polynomials. At the same time, taking into account \cite{DurVanAss} our results can be considered as results for a special case of matrix orthogonal polynomials. Briefly speaking, in \cite{DurVanAss} it was shown that Sobolev type orthogonal polynomials are strongly related to matrix orthogonal polynomials.

The structure of the paper is as follows. In Section 2 some basic background and notations are presented. Section 3 deals with an extension of the Geronimus transformation to the case of arbitrary polynomials $h$. More precisely, we obtain the symmetric bilinear
forms such that the operator of multiplication by $h$ is a symmetric operator with an extra
condition.  In Section 4 we study the corresponding sequences of monic orthogonal polynomials. Section 5 is focused on an interpretation of the matrix of the above multiplication operator based on a Darboux transformation with parameters. Finally, in Section 6 we establish a connection between such factorizations and block Jacobi matrices associated with matrix orthogonal polynomials deduced from the Sobolev type orthogonal polynomials. Thus we get that multiple Geronimus transformations in the scalar case yield Geronimus spectral transformations for this special case of matrix orthogonal polynomials.

\section{Preliminaries}
Here we give some basic notations and facts. Let us start by considering a symmetric bilinear form
\begin{equation}\label{a}
(f,g)_0=\int_{I} f(t)g(t)d\mu_0(t),
\end{equation}
where $\mu_0$ is a nontrivial probability measure supported on an infinite subset  $I$ of the real line. In general, if we assume that $(\cdot,\cdot)_0$ is quasi-definite, then we know  that the corresponding sequence of monic orthogonal polynomials   $\{P_n(t)\}_{n\geq0}$  satisfies a three term recurrence relation

$$tP_n(t)=P_{n+1} (t)+D_nP_{n}(t)+C_nP_{n-1}(t),$$
where $D_{n}$ and $C_{n}$ are real numbers with $C_{n}\neq 0.$

Using a matrix notation, the above expression  reads

$$tP=J_{mon}P, $$
where
\begin{equation*}
J_{mon}=
\begin{bmatrix}
D_0 &1  &   &\\
C_1 &D_1& 1&\\
    &C_2&D_2&\ddots\\
        & & \ddots&\ddots
\end{bmatrix}
\end{equation*}
is a monic Jacobi matrix and $P=\left(P_0,P_1,\cdots\right)^T$.
If  we assume that  $(\cdot,\cdot)_0$ is a positive definite bilinear form then  there is a sequence of orthonormal polynomials $\{\hat{P}_n\}_{n\geq 0}$ such that

$$t\hat{P}_n (t)=\hat{C}_{n+1}\hat{P}_{n+1}(t)+\hat{D}_{n}\hat{P}_{n}(t)+\hat{C}_{n}\hat{P}_{n-1}(t), n\geq 0.$$
Notice that in this case $\hat{C}_{n} ^{2} =C_n\geq 0$.

With the above sequence of orthonormal polynomials we can associate a tridiagonal symmetric  Jacobi matrix
of the form
\begin{equation*}
\hat{J}=
\begin{bmatrix}
\hat{D}_0 &\hat{C}_{1}  &   &\\
\hat{C}_1 &\hat{D}_1& \hat{C}_2&\\
    &\hat{C}_2&\hat{D}_2&\ddots\\
        & & \ddots&\ddots
\end{bmatrix}
\end{equation*}
such that $t\hat{P}=\hat{J} \hat{P}$, where $\hat{P}=(\hat{P}_0,\hat{P}_1,\dots)^{T}$.

In what follows we will need the $n$-th reproducing kernel $K_n (t,y)$ associated with the monic orthogonal polynomial sequence $\{P_n(t)\}_{n\geq0}$ which is defined by the formula

$$K_n(t,y)=\sum_{k=0}^{n}\frac{P_k(t)P_k(y)}{\|P_k\|^2_{\mu_0}},$$
where $\|P_k\|^2_{\mu}= \int_{I}|P_{k}(t)|^2d\mu_0,$
At the same time, there is an explicit expression for $K_n(x,y)$, which is the so-called Christoffel-Darboux formula

$$K_n(x,y)=\frac{P_{n+1}(x)P_{n}(y)-P_{n}(x)P_{n+1}(y)}{(x-y)\|P_n\|^{2}_{\mu}},$$
for $x\ne y$. We will also use the following notation for the partial derivatives of $K_n(x,y)$:
$$K_n^{(i,j)}(x,y)=\frac{\partial^{i+j}(K_n(x,y))}{\partial x^i \partial y^j}.$$

\section{An extension of the Geronimus transformation to the multiple case}

Let $h(t)$ be a monic polynomial  of $\deg h=N$. Let us define a symmetric bilinear form $[\cdot,\cdot]_h$ on the linear space $\mathbb{P} $ of all polynomials with real coefficients by

\begin{equation}\label{b}
[hf,g]_h=[f,hg]_h=\int_{I} f(t)g(t)d\mu_0(t).
\end{equation}
Clearly, this definition does not determine the bilinear form $[\cdot,\cdot]_h$ uniquely.
However, the elements of the symmetric matrix

\begin{equation}\label{c}
\hat{S}=
\begin{bmatrix}
[1,1]_h     & \cdots & [1,t^{N-1}]_h \\
 \vdots     &    \vdots             &\vdots &         \\
 [t^{N-1},1]_h    &\cdots & [t^{N-1},t^{N-1}]_h
\end{bmatrix}
=\begin{bmatrix}
s_{0,0} & \cdots & s_{0,N-1} \\
 \vdots &  \vdots &    \vdots        \\
 s_{N-1,0}&\cdots & s_{N-1,N-1}
\end{bmatrix}
\end{equation}
can be chosen arbitrarily. It should be noted that the operator of multiplication by $h$ is symmetric  with respect
to the inner product $[\cdot,\cdot]_h$. If we assume that the underlying symmetric bilinear form is quasi-definite then
the corresponding sequence of monic orthogonal polynomials $\{P_n^{*}(t)\}_{n\geq0}$ satisfies the relation

$$h(t)P^{*}_n(t)=\sum_{k=n-N}^{n+N}c^{[n]}_{k}P^{*}_k(t),$$
where $c^{[n]}_{n+N}=1$, and $c^{[n]}_{n-N}>0$ for $n\geq N$. Thus, we can associate with the sequence  $\{P_n^{*}(t)\}_{n\geq0}$
a $2N+1$ band matrix of the form
\begin{equation}\label{Srr}
J_{mon}^{*}=
\begin{bmatrix}
c^{[0]}_{0}&c^{[0]}_{1}&\cdots&c^{[0]}_{N-1}&1& & & &\\
c^{[1]}_{0}&c^{[1]}_{1}&\cdots&\cdots&c_N^{[1]}&1& & &\\
\vdots&\vdots&\ddots & & &\ddots&\ddots & &\\
c^{[N]}_{0}&c^{[N]}_{1}&\cdots &\ddots & & &c^{[N]}_{2N-1}&1&\\
0&c^{[N+1]}_{1}&\cdots& &\ddots & & &c^{[N+1]}_{2N}&1&\\
& &\ddots & & & & & &\ddots&\ddots
\end{bmatrix}.
\end{equation}

Before dealing with  the properties of the symmetric bilinear form $[\cdot,\cdot]_h$, we  will choose an appropriate basis in the linear space $\mathbb{P}$.  Namely, let us consider the basis

 $$\mathfrak{B}_h=\{t^mh^k,\text{ }k\geq 0, 0\leq m \leq N-1\}.$$
This allows us to express every polynomial $f$ as 
$$f(t)=\sum_{0\leq m\leq N-1,  k\geq 0} a_{k,m}t^mh^k(t).$$
Moreover, if we fix $k$ and define the linear operator
$$ S_{k,h}(f)(t)=\sum_{m=0}^{N-1}a_{k,m}t^{m}h^{k}(t)$$
then we have  $f=\sum_{k\geq 0}S_{k,h}(f)(t)$.

Let  $\alpha_1,\dots, \alpha_p$  be the zeros of $h(t)$  and $\beta_1\cdots\beta_p$ be their corresponding multiplicities, i.e.

$$h(t)=(t-\alpha_1)^{\beta_1}(t-\alpha_2)^{\beta_2}\cdots(t-\alpha_p)^{\beta_{p}} \text{\ \ \ \ \ \ \ with\ \ \ \ \ \ } \sum_{i=1}^{p}\beta_i=N.$$
For each $\alpha_i$, the polynomial $h(t)$ can be represented in the form
$$h(t)=(t-\alpha_i)^{\beta_i}q_i(t)\ \ \ \ \ \ \text{where} \ \ \ \ q_i(\alpha_i)\neq 0.$$
According to the Leibniz product rule for derivatives, we have that

$$h^{(r)}(t)=\sum_{k=0}^{r}\binom{r}{k}\frac{\beta_i!}{(\beta_i-k)!}(t-\alpha_i)^{\beta_i-k}q^{(r-k)}_i(t).$$
Thus,

$$h^{(j)}(\alpha_i)=0\ \ \ \ \ \text{for} \ \ \ \ j=0,\cdots,\beta_i-1.$$
If  $f$ is a polynomial  then using its representation in terms of the basis defined above we get

$$f^{(j)}(\alpha_i)=\sum_{k=j}^{N-1}a_{0,k}\frac{k!}{(k-j)!}\alpha_i^{k-j}.$$
As a consequence,  for  $i=1,\dots, p,$ one has
\begin{equation}\label{form5}
\begin{bmatrix}
f(\alpha_i)\\
f^{(1)}(\alpha_i)\\
\vdots\\
\vdots\\
f^{(\beta_i-2)}(\alpha_i)\\
f^{(\beta_i-1)}(\alpha_i)
\end{bmatrix}
=
\begin{bmatrix}
1&\alpha_i&\alpha_i^2& \alpha_i^3&\cdots&\cdots&\cdots&\alpha_i^{N-1}\\
 &1&2\alpha_i&3\alpha_i^{2}      &\cdots&\cdots&\cdots&(N-1)\alpha_i^{N-2}\\
 & &2!&6\alpha_i                &\cdots&\cdots&\cdots&(N-1)(N-2)\alpha_i^{N-3}\\
 & &  &\ddots&\ddots                   &\cdots&\cdots&\vdots\\
 & &  &      &(\beta_i-1)!&\beta_i\alpha_i    &\cdots&\frac{(N-1)!}{(N-\beta_i)!}\alpha_i^{N-\beta_i}
\end{bmatrix}_{\beta_i\times N}
\begin{bmatrix}
a_{0,0}\\
a_{0,1}\\
\vdots\\
\vdots\\
a_{0,N-2}\\
a_{0,N-1}\\
\end{bmatrix}.
\end{equation}
Introducing the matrices
\[
F_i=
\begin{bmatrix}
f(\alpha_i)\\
f^{(1)}(\alpha_i)\\
\vdots\\
\vdots\\
f^{(\beta_i-2)}(\alpha_i)\\
f^{(\beta_i-1)}(\alpha_i)
\end{bmatrix},\quad
A_i=\begin{bmatrix}
1&\alpha_i&\alpha_i^2& \alpha_i^3&\cdots&\cdots&\cdots&\alpha_i^{N-1}\\
 &1&2\alpha_i&3\alpha_i^{2}      &\cdots&\cdots&\cdots&(N-1)\alpha_i^{N-2}\\
 & &2!&6\alpha_i                &\cdots&\cdots&\cdots&(N-1)(N-2)\alpha_i^{N-3}\\
 & &  &\ddots&\ddots                   &\cdots&\cdots&\vdots\\
 & &  &      &(\beta_i-1)!&\beta_i\alpha_i    &\cdots&\frac{(N-1)!}{(N-\beta_i)!}\alpha_i^{N-\beta_i}
\end{bmatrix}_{\beta_i\times N}
\]
we see that formula \eqref{form5} for $i=1,\dots,p$, i.e.
\begin{equation*}
F_i=A_i
\begin{bmatrix}
a_{0,0}\\
\vdots\\
a_{0,N-1}\\
\end{bmatrix},
\end{equation*}
can be gathered as follows:
\begin{equation}\label{part}
\begin{bmatrix}
F_1\\
\vdots\\
F_p
\end{bmatrix}
=
\begin{bmatrix}
A_1\\
\vdots\\
A_p
\end{bmatrix}
_{N\times N}
\begin{bmatrix}
a_{0,0}\\
\vdots\\
a_{0,N-1}
\end{bmatrix}
= \mathcal{A}
\begin{bmatrix}
a_{0,0}\\
\vdots\\
a_{0,N-1}\\
\end{bmatrix}
\end{equation}

By the definition, the above system of linear equations \eqref{part} has at least a solution $[a_{0,0},a_{0,1}\cdots a_{0,N-1}]^T$.  Let us assume that there is another one which we denote by $[a^{\prime}_{0,0},a^{\prime}_{0,1}\cdots a^{\prime}_{0,N-1}]^T$. Then we define the polynomials $u(t)=\sum_{m=0}^{N-1}a_{0,m}t^m$ and $v(t)=\sum_{m=0}^{N-1}a^{\prime}_{0,m}t^m$. So, in view of \eqref{part} we have that for each $i=1,\dots, p$,
$$u^{(j)}(\alpha_i)=f^{(j)}(\alpha_i)=u^{(j)}(\alpha_i) \ \ \ \ \ \ \ \text{for}\ \ \ \ j=0,\cdots ,\beta_i-1.  $$
We now define the polynomial  $c(t)=u(t)-v(t).$ Notice that  $\deg c\leq N-1$ but, on the other hand,
$$c^{(j)}(\alpha_i)=0 \ \ \ \ \ \ \ \text{for}\ \ \ \ j=0,\cdots ,\beta_i-1.$$
This implies that  $\alpha_i$ is a zero of multiplicity at least  $\beta_i$ for $c(t)$ and since this is true for every $i=1\cdots, p,$ then $\deg c\geq N$. So, necessarily $c(t)=0,$ i.e. $u(t)=v(t)$. Therefore the solution of \eqref{part} is unique and, as a consequence, $\mathcal{A}$ is a nonsingular matrix.  In particular, if the zeros of  $h(t)$ are simple then \eqref{part} takes the form

\begin{equation}
\begin{bmatrix}
f(\alpha_1)\\
\vdots\\
f(\alpha_N)
\end{bmatrix}
=
\begin{bmatrix}
1&\alpha_1&\cdots&\alpha_1^{N-1}\\
\vdots   &\vdots  & \vdots \\
1&\alpha_N&\cdots&\alpha_N^{N-1}
\end{bmatrix}
_{N\times N}
\begin{bmatrix}
a_{0,0}\\
\vdots\\
a_{0,N-1}
\end{bmatrix}.
\end{equation}
In other words, the corresponding matrix $\mathcal{A}$ is a Vandermonde matrix.

\begin{pro}\label{prop1} Let $\mu$ be  a nontrivial probability measure with finite moments of all nonnegative orders.
Consider the measure $d\mu_0=hd\mu$. Let  $f(t)=\sum a_{k,m}t^mh^k$ and $g(t)=\sum b_{k^{\prime},m^{\prime}}t^{m^{\prime}}h^{k^{\prime}}$ be polynomials.  Then $[\cdot,\cdot]_h$ can be represented as 
\begin{equation}\label{d}
[f,g]_h =\int f(t)g(t)d\mu +
\begin{bmatrix}
F^{T}_1&\cdots & F^{T}_p
\end{bmatrix}
\mathcal{A}^{-T}S\mathcal{A}^{-1}
\begin{bmatrix}
G_1\\
\vdots\\
G_p
\end{bmatrix},
\end{equation}
where $G_i=[g(\alpha_i),\cdots, g^{(\beta_i-1)}(\alpha_i)]^T$,  $F_i=[f(\alpha_i),\cdots, f^{(\beta_i-1)}(\alpha_i)]^T$  and
the symmetric matrix $S$ has the form
\begin{equation*}
S=
\begin{bmatrix}
s_{0,0}-\int_{I} d\mu & \cdots & s_{0,N-1}-\int_{I} t^{N-1}d\mu \\
 \vdots &  \vdots &    \vdots        \\
 s_{N-1,0}-\int_{I} t^{N-1} d\mu&\cdots & s_{N-1,N-1}-\int_{I} t^{2N-2}d\mu
\end{bmatrix}.
\end{equation*}
\end{pro}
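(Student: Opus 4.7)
The strategy is to reduce everything to the low-order piece in the basis $\mathfrak{B}_h$ by exploiting the symmetry $[hf,g]_h=[f,hg]_h=\int fg\,d\mu_0$, and then translate the coordinates $a_{0,m}$ into derivative data at the zeros of $h$ via the matrix $\mathcal{A}$.

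First, I would decompose $f$ and $g$ in the basis $\mathfrak{B}_h$ as $f=f_0+h\tilde f$ and $g=g_0+h\tilde g$, where $f_0(t)=\sum_{m=0}^{N-1}a_{0,m}t^m$ and $g_0(t)=\sum_{m=0}^{N-1}b_{0,m}t^m$ are the pieces of degree less than $N$, and $\tilde f,\tilde g$ are the remaining polynomials. Expanding bilinearly,
\[
[f,g]_h=[f_0,g_0]_h+[f_0,h\tilde g]_h+[h\tilde f,g_0]_h+[h\tilde f,h\tilde g]_h.
\]
The three latter terms can be moved through the symmetry relation and the assumption $d\mu_0=h\,d\mu$: for instance $[h\tilde f,h\tilde g]_h=[\tilde f,h^2\tilde g]_h=\int \tilde f\,(h\tilde g)\,d\mu_0=\int (h\tilde f)(h\tilde g)\,d\mu$, and similarly $[f_0,h\tilde g]_h=\int f_0(h\tilde g)\,d\mu$ and $[h\tilde f,g_0]_h=\int (h\tilde f)g_0\,d\mu$. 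Summing and comparing with the expansion of $\int fg\,d\mu=\int(f_0+h\tilde f)(g_0+h\tilde g)\,d\mu$, everything collapses to
\[
[f,g]_h=\int fg\,d\mu+[f_0,g_0]_h-\int f_0 g_0\,d\mu.
\]

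Next, by the definition of $\hat S$ in \eqref{c} and linearity,
\[
[f_0,g_0]_h-\int f_0g_0\,d\mu=\sum_{i,j=0}^{N-1}a_{0,i}\Bigl(s_{i,j}-\int t^{i+j}d\mu\Bigr)b_{0,j}=\mathbf{a}^T S\,\mathbf{b},
\]
where $\mathbf{a}=(a_{0,0},\ldots,a_{0,N-1})^T$ and $\mathbf{b}=(b_{0,0},\ldots,b_{0,N-1})^T$. Finally, I would invoke \eqref{part} together with the nonsingularity of $\mathcal{A}$ already established above: since $f$ and $f_0$ share the same Hermite data $(f(\alpha_i),\dots,f^{(\beta_i-1)}(\alpha_i))$ at each zero of $h$ (because $h$ vanishes to order $\beta_i$ at $\alpha_i$), the system \eqref{part} gives $\mathbf{a}=\mathcal{A}^{-1}[F_1^T,\ldots,F_p^T]^T$, and likewise $\mathbf{b}=\mathcal{A}^{-1}[G_1^T,\ldots,G_p^T]^T$. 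Substituting yields exactly formula \eqref{d}.

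There is essentially no hard obstacle here; the argument is a bookkeeping exercise. The one point that requires care is verifying that $f$ and $f_0$ really do give rise to the same vector $F_i$ for every $i$, which follows from $h^{(j)}(\alpha_i)=0$ for $j<\beta_i$ (already observed in the excerpt) combined with the Leibniz rule applied to $h\tilde f$. Once this identification is made, the symmetry relation defining $[\cdot,\cdot]_h$ does all the work of eliminating the high-order coordinates $a_{k,m}$ with $k\geq 1$, confirming explicitly that the bilinear form is fully determined by $\mu$ and the free symmetric matrix $\hat S$ from \eqref{c}.
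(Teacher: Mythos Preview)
Your proof is correct and follows essentially the same route as the paper's: both isolate the low-degree pieces $f_0=\sum_{m=0}^{N-1}a_{0,m}t^m$ and $g_0$, use the defining relation $[hf,g]_h=\int fg\,d\mu_0$ together with $d\mu_0=h\,d\mu$ to reduce to the identity $[f,g]_h=\int fg\,d\mu+[f_0,g_0]_h-\int f_0g_0\,d\mu$, and then invoke \eqref{part} to pass from the coefficients $(a_{0,m})$ to the Hermite data $F_i$. The only cosmetic difference is that the paper splits asymmetrically as $[f-f_0,g]_h+[f_0,g-g_0]_h+[f_0,g_0]_h$, whereas you expand symmetrically into four cross terms; the algebra and the final result are identical.
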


\begin{proof}
To compute $[f,g]_h$ for the given polynomials $f$ and $g$ let us observe that the polynomial
\[
f(t)-\sum_{m=0}^{N-1}a_{0,m}t^m
\]
is divisible by $h$ due to the construction. Now, we have
\begin{align*}
[f,g]_h&=\left[f(t)-\sum_{m=0}^{N-1}a_{0,m}t^m,g(t)\right]_h+\left[\sum_{m=0}^{N-1}a_{0,m}t^m,g(t)\right]\\ \notag
&= \left(\frac{f(t)-\sum_{m=0}^{N-1}a_{0,m}t^m}{h},g(t)\right)_0+\left[\sum_{m=0}^{N-1}a_{0,m}t^m,g(t)-\sum_{m^{\prime}=0}^{N-1}b_{0,m^{\prime}}t^{m^{\prime}}\right]_h+\left[\sum_{m=0}^{N-1}a_{0,m}t^m,\sum_{m^{\prime}=0}^{N-1}b_{0,m^{\prime}}t^{m^{\prime}}\right]_h\\ \notag
&=\int_{I} \left(\frac{f(t)-\sum_{m=0}^{N-1}a_{0,m}t^m}{h}\right)g(t)d\mu_0 \\
&+ \int_{I}\sum_{m=0}^{N-1}a_{0,m}t^m \left (\frac{g(t)-\sum_{m^{\prime}=0}^{N-1}b_{0,m^{\prime}}t^{m^{\prime}}}{h}\right)d\mu_0+\sum_{m=0}^{N-1}\sum_{m^{\prime}=0}^{N-1}a_{0,m}b_{0,m^{\prime}}[t^m,t^{m^{\prime}}]_h\\
&=\int_{I} f(t)g(t)d\mu-\sum_{m=0}^{N-1}\sum_{m^{\prime}=0}^{N-1}a_{0,m}b_{0,m^{\prime}}\int_{I} t^{m^{\prime}+m}d\mu+\sum_{m=0}^{N-1}\sum_{m^{\prime}=0}^{N-1}a_{0,m}b_{0,m^{\prime}}s_{m,m^{\prime}},
\end{align*}
where $s_{m,m^{\prime}}=[t^m,t^{m^{\prime}}]_h$.
In matrix form the above expression reads
\begin{align*}
[f,g]_h &=\int_{I} f(t)g(t)d\mu +\\ \notag
 &\begin{bmatrix} \notag
a_{0,0}&\cdots& a_{0,N-1}
\end{bmatrix}
\begin{bmatrix}
s_{0,0}-\int_{I} d\mu & \cdots & s_{0,N-1}-\int_{I} t^{N-1}d\mu \\
 \vdots &  \vdots &    \vdots        \\
 s_{N-1,0}-\int_{I}t^{N-1} d\mu&\cdots & s_{N-1,N-1}-\int_{I} t^{2N-2}d\mu
\end{bmatrix}
\begin{bmatrix}
b_{0,0} \\
 \vdots   \\
 b_{0,N-1}
\end{bmatrix}.
\end{align*}
Next, using \eqref{part} we get \eqref{d}.
\end{proof}

If we assume that $ h(t)=t^N$, then we have the following result that appears in \cite{Dere} for $N=2$.

\begin{coro}
If $\mu$ is a nontrivial probability measure with finite moments of all nonnegative orders then
\begin{equation}\label{coro1}
[f,g]_h=\int_{I}f(t)g(t)d\mu+
\begin{pmatrix}
f(0)&\cdots&f^{(N-1)}(0)\\
\end{pmatrix}
M
\begin{pmatrix}
g(0)\\
\vdots\\
g^{(N-1)}(0)
\end{pmatrix}
\end{equation}
where $M$ is a symmetric matrix such that
\begin{equation*}
M=
\begin{bmatrix}
\frac{1}{0!}&&\\
&\ddots&\\
&&&\frac{1}{(N-1)!}
\end{bmatrix}
S
\begin{bmatrix}
\frac{1}{0!}&&\\
&\ddots&\\
&&&\frac{1}{(N-1)!}
\end{bmatrix}.
\end{equation*}
\end{coro}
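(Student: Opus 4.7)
My plan is to specialize Proposition \ref{prop1} to the case $h(t)=t^N$ and carry out the resulting matrix computation explicitly. When $h(t)=t^N$, there is a single zero $\alpha_1=0$ of multiplicity $\beta_1=N$, so $p=1$ and the block column $[F_1^T \; \cdots \; F_p^T]^T$ collapses to the single vector $F_1=[f(0),f'(0),\ldots,f^{(N-1)}(0)]^T$, with the analogous expression for $G_1$. Thus the surface term in \eqref{d} is already of the bilinear form required by the corollary, and all that remains is to identify $\mathcal{A}^{-T}S\mathcal{A}^{-1}$ with $M$.

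The key computational step is to evaluate $\mathcal{A}$ at $\alpha_1=0$. Since $\mathcal{A}=A_1$ in this case, inspecting the displayed form of $A_i$ shows that every entry with a positive power of $\alpha_i$ vanishes, leaving only the diagonal entries $k!$ for $k=0,1,\ldots,N-1$. Concretely, writing $f(t)=\sum_{m=0}^{N-1}a_{0,m}t^m+t^N(\cdots)$, Taylor's formula at $0$ gives $f^{(k)}(0)=k!\,a_{0,k}$, which confirms that
\[
\mathcal{A}=\mathrm{diag}\bigl(0!,1!,2!,\ldots,(N-1)!\bigr).
\]
Being diagonal, $\mathcal{A}^{-T}=\mathcal{A}^{-1}=\mathrm{diag}(1/0!,1/1!,\ldots,1/(N-1)!)$.

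Substituting these expressions into the formula of Proposition \ref{prop1} immediately produces
\[
[f,g]_h=\int_I f(t)g(t)\,d\mu+\begin{pmatrix} f(0) & \cdots & f^{(N-1)}(0)\end{pmatrix} M \begin{pmatrix} g(0)\\ \vdots \\ g^{(N-1)}(0)\end{pmatrix},
\]
with $M=\mathrm{diag}(1/k!)\, S\, \mathrm{diag}(1/k!)$, which is exactly the form claimed. The symmetry of $M$ follows from the symmetry of $S$ (itself a consequence of the symmetry of $[\cdot,\cdot]_h$), because conjugation by a diagonal matrix preserves symmetry.

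There is essentially no obstacle beyond recognizing that the confluence $\alpha_1=0$ degenerates the upper-triangular matrix $A_1$ to a diagonal one; the proof is then a direct substitution into \eqref{d}. If anything, the only point requiring a brief remark is that the $\beta_i!,(\beta_i+1)!\alpha_i/\ldots$-style entries displayed on the last row of the generic $A_i$ all collapse to the single nonzero entry $(N-1)!$ on the diagonal when $\beta_1=N$ and $\alpha_1=0$, so that the whole matrix truly reduces to $\mathrm{diag}(0!,1!,\ldots,(N-1)!)$ with no hidden off-diagonal contribution.
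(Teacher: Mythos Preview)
Your proof is correct and follows exactly the route the paper intends: the corollary is stated immediately after Proposition \ref{prop1} with the preface ``If we assume that $h(t)=t^N$'', and no separate proof is given because it is precisely the specialization you carry out. Your identification of $\mathcal{A}=\mathrm{diag}(0!,1!,\dots,(N-1)!)$ when $p=1$, $\alpha_1=0$, $\beta_1=N$ is the only computation needed, and you have done it correctly.
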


Since the values $s_{i.j}$ in \eqref{c} are arbitrary, we can take them in such a way that the matrix $S$ is diagonal, i.e.
\begin{equation*}
S=
\begin{bmatrix}
\lambda_0&       &\\
         &\ddots &\\
         &       &\lambda_{N-1}
\end{bmatrix}.
\end{equation*}
In this case \eqref{coro1} reduces to

$$[f,g]_h=\int_{I}f(t)g(t)d\mu+\sum_{k=0}^{N-1}M_k f^{(k)}(0)g^{(k)}(0)\ \ \text{with} \ \ \  \ M_k=\frac{\lambda_k}{(k!)^2},$$
which is a  diagonal discrete Sobolev inner product. In other words, we see that $N$-th iterated Geronimus transformation of $(\cdot, \cdot)_0$ generates discrete Sobolev inner products.

\section{Orthogonal polynomials associated  to the multiple Ge\-ro\-nimus transformation}

Next, assuming that the bilinear form $[\cdot,\cdot]_h$ is quasi-definite,  we will represent the monic  polynomials $\{P_n^{*}(t)\}_{n\geq0}$ orthogonal with respect to $[\cdot,\cdot]_h$,  in terms of the sequence $\{P_n(t)\}_{n\geq0}$ of monic orthogonal polynomials  with respect to  $(\cdot,\cdot)_0$ . Notice that from the orthogonality of $P_n^{*} (t)$, for the elements of the basis  $\mathfrak{B}_h$ we get
$$[P_n^{*},t^{m}h^{k}]_h=[t^{m}h^{k},P_n^{*}]_h=0 \ \text{ for} \ \ \ Nk+m\leq n-1.$$
So, for $n>N$, the definition of the bilinear form yields
$$[P_n^{*},t^{m}h^{k}]_h=(P_n^{*},t^{m}h^{k-1})_0=0  \text{ for  \ } N(k-1)+m< n-N  \text{ and\ } k\geq 1,$$
which basically means that
\begin{equation}\label{rec}
P_n^{*}(t)=P_n(t)+A^{[n]}_{n-1}P_{n-1}(t)+\cdots +A^{[n]}_{n-N}P_{n-N}(t).
\end{equation}
At the same time, we also have that
$$[P_n^{*},t^m]_h=0, \text{ \ \ \ for \ \ \ } m=0,\cdots N-1,$$
which can be rewritten as
\begin{equation*}
[P_n,t^m]_h+A^{[n]}_{n-1}[P_{n-1},t^m]_h+\cdots +A^{[n]}_{n-N}[P_{n-N},t^m]_h=0.
\end{equation*}
The latter relation is equivalent to the system of linear equations
\begin{equation}\label{si}
\begin{bmatrix}
[P_{n-1},1]_h& \cdots &[P_{n-N},1]_h \\
 \vdots & &\vdots  \\
[P_{n-1},t^{N-1}]_h& \cdots &[P_{n-N},t^{N-1}]_h
\end{bmatrix}
\begin{bmatrix}
A^{[n]}_{n-1}\\
\vdots\\
A^{[n]}_{n-N}
\end{bmatrix}
=
\begin{bmatrix}
-[P_{n},1]_h\\
\vdots\\
-[P_{n},t^{N-1}]_h
\end{bmatrix}.
\end{equation}
Since  $P^*_n(t)$ is a monic polynomial of degree $n$, we know that \eqref{si} has at least one solution. If we suppose that it has two different  solutions, then there are two monic polynomials  of degree  $n$ that satisfy the orthogonality condition. But this contradicts the uniqueness of the sequence $\{P ^*_n(t) \} _ {n\geq0}.$
Moreover, the uniqueness also gives that
\begin{equation}\label{de}
d_n^{*}=
\begin{vmatrix}
[P_{n-1},1]_h& \cdots &[P_{n-N},1]_h \\
 \vdots & &\vdots  \\
[P_{n-1},t^{N-1}]_h& \cdots &[P_{n-N},t^{N-1}]_h
\end{vmatrix}\ne 0.
\end{equation}
Further, according to Cramer's rule, the  polynomials  $P_n^{*}(t)$ can be presented as 

\begin{equation*}
P_n^{*}(t)=\frac{1}{d^{*}_n}
\begin{vmatrix}
P_{n}(t)& [P_{n},1]_h&\cdots &[P_{n},t^{N-1}]_h\\
\vdots& \vdots    &\cdots&\vdots\\
P_{n-i}(t)& [P_{n-i},1]_h&\cdots &[P_{n-i},t^{N-1}]_h\\
\vdots& \vdots    &\cdots&\vdots\\
P_{n-N}(t)& [P_{n-N},1]_h&\cdots &[P_{n-N},t^{N-1}]_h
\end{vmatrix}.
\end{equation*}
Now for  $0\leq q \leq N-1$ and $S_{0,h}(P_j)(t)=\sum_{k=0}^{N-1}c_{0,k}t^{k}$, we get
\begin{align*}
[P_j,t^q]_h&=[P_j(t)-S_{0,h}(P_j)(t)+S_{0,h}(P_j)(t),t^q]_h\\
&=\left[\sum_{m\geq 0}S_{m,h}(P_j)(t)-S_{0,h}(P_j)(t),t^q\right]_h+\left[\sum_{k=0}^{N-1}c_{0,k}t^{k},t^q\right]_h\\
&=\left( \frac{\sum_{m\geq 0}S_{m,h}(P_j)(t)-S_{0,h}(P_j)(t)}{h},t^q \right)_0+\sum_{k=0}^{N-1}c_{0,k}s_{k,q}\\
&=\sum_{m\geq 1}\int_{I}\frac{S_{m,h}(P_j)(t)}{h}t^qd\mu_{0}+\sum_{k=0}^{N-1}c_{0,k}s_{k,q}.
\end{align*}
Let us stress that the above analysis was done for  $n\geq N$. However, it is clear that  for $n\leq N$ the polynomial $P_n^{*}$
has the form
\begin{equation*}
P_n^{*}(t)=P_n(t)+A^{[n]}_{n-1}P_{n-1}(t)+\cdots +A^{[n]}_{0}P_{0}(t),
\end{equation*}
where we put $P_m(t)=0$ for $m<0$. So if we use similar arguments as above we have that for  $n\leq N$
it is true that
\begin{equation*}
P_n^{*}(t)=\frac{1}{d^{*}_n}
\begin{vmatrix}
P_{n}(t)& [P_{n},1]_h&\cdots &[P_{n},t^{n-1}]_h\\
\vdots& \vdots    &\cdots&\vdots\\
P_{n-i}(t)& [P_{n-i},1]_h&\cdots &[P_{n-i},t^{n-1}]_h\\
\vdots& \vdots    &\cdots&\vdots\\
P_{0}(t)& [P_{0},1]_h&\cdots &[P_{0},t^{n-1}]_h
\end{vmatrix}
\end{equation*}
As a last remark, let us notice that if  $n<N$ then $d^{*}_n$ is the determinant of a matrix of size $n\times n$, which does depend on $n$,
while in the other cases  $d^{*}_n$ is the determinant of a matrix  of size $N\times N$. \\
Thus, we can deduce the following.

\begin{pro}
Let $(\cdot,\cdot)_0$ be a quasi-definite bilinear form and let   $\{P_n (t)\}_{n\geq0}$  be  the  corresponding sequence of monic orthogonal polynomials. Then  the symmetric bilinear form $[\cdot,\cdot]_h$ is quasi-definite if and only if  $d_n^*\neq 0$ for all $n\in \mathbb{N}$.
Moreover, in the quasi-definite case, the sequence of the monic polynomials  $\{P_n^{*}(t)\}_{n\geq0}$ orthogonal  with respect to  $[\cdot,\cdot]_h.$ admits the representation
\begin{equation}\label{form13}
P_n^{*}(t)=\frac{1}{d^{*}_n}
\begin{vmatrix}
P_{n}(t)& [P_{n},1]_h&\cdots &[P_{n},t^{N-1}]_h\\
\vdots& \vdots    &\cdots&\vdots\\
P_{n-i}(t)& [P_{n-i},1]_h&\cdots &[P_{n-i},t^{N-1}]_h\\
\vdots& \vdots    &\cdots&\vdots\\
P_{n-N}(t)& [P_{n-N},1]_h&\cdots &[P_{n-N},t^{N-1}]_h
\end{vmatrix},
\end{equation}
where  $d^{*}_n$ is defined by \eqref{de} and
$$[P_j,t^{q}]_h=\sum_{m\geq 1}\int_{I}\frac{S_{m,h}(P_j)(t)}{h}t^qd\mu_{0}+\sum_{k=0}^{N-1}c_{0,k}s_{k,q}.$$
\end{pro}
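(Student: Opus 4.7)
The plan is to assemble the computations laid out before the statement into a three-part argument: the necessary direction, the sufficient direction, and the auxiliary identity. For the necessary direction, I would start from the hypothesis that $\{P_n^*\}$ exists uniquely. The defining identity $[f,hg]_h=(f,g)_0$ converts the orthogonality $[P_n^*,t^m h^k]_h=0$ for basis elements of $\mathfrak{B}_h$ with $k\ge 1$ into the $(\cdot,\cdot)_0$-orthogonality of $P_n^*$ to all polynomials of degree $\le n-N-1$, which forces $P_n^*-P_n\in\mathrm{span}\{P_{n-1},\dots,P_{n-N}\}$, i.e.\ \eqref{rec}. The remaining conditions $[P_n^*,t^m]_h=0$ for $m=0,\dots,N-1$ are precisely the system \eqref{si} in the coefficients $A_{n-k}^{[n]}$, and uniqueness of $P_n^*$ forces that system to have a unique solution, hence its coefficient matrix is invertible, i.e.\ $d_n^*\ne 0$; the truncated version of the same argument handles $n<N$.

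Conversely, if $d_n^*\ne 0$ for every $n$, I would define $P_n^*$ by the determinantal formula \eqref{form13} and verify three properties. Expanding along the first column identifies the coefficient of $P_n(t)$ as the $(1,1)$-minor, which is the transpose of the matrix appearing in \eqref{de} and therefore equals $d_n^*$; after dividing by $d_n^*$, $P_n^*$ is monic of degree $n$ and lies in $\mathrm{span}\{P_n,\dots,P_{n-N}\}$. For $0\le m\le N-1$, $[P_n^*,t^m]_h$ is, up to the factor $1/d_n^*$, a determinant with two equal columns and therefore vanishes. For $k\ge 1$ with $Nk+m\le n-1$, the identity $[P_n^*,t^m h^k]_h=(P_n^*,t^m h^{k-1})_0$ vanishes because $\deg(t^m h^{k-1})\le n-N-1$ and $P_n^*$ is a combination of $P_n,\dots,P_{n-N}$. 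Since $\{t^m h^k:Nk+m\le n-1\}$ spans all polynomials of degree $<n$, $P_n^*$ is the monic OP of degree $n$, and existence of such a sequence for every $n$ is precisely quasi-definiteness of $[\cdot,\cdot]_h$.

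For the auxiliary formula, I would decompose $P_j=\sum_{m\ge 0}S_{m,h}(P_j)$ in $\mathfrak{B}_h$: summands with $m\ge 1$ carry an explicit factor of $h$, so $[\cdot,\cdot]_h$ collapses to $\int S_{m,h}(P_j)/h\cdot t^q\,d\mu_0$, while the $m=0$ summand produces $\sum_{k=0}^{N-1}c_{0,k}s_{k,q}$ directly from \eqref{c}. The main obstacle is the bookkeeping across the decomposition of the test space into polynomials divisible by $h$ (where $[\cdot,\cdot]_h$ reduces to a $\mu_0$-integral) and its complement $\mathrm{span}\{1,t,\dots,t^{N-1}\}$ (where the free parameters $s_{k,q}$ genuinely intervene). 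The key algebraic observation that makes both directions work is $P_n^*\in\mathrm{span}\{P_n,\dots,P_{n-N}\}$: this simultaneously reduces the orthogonality conditions to the $N\times N$ system \eqref{si} and allows the defining identity to dispose of all test polynomials divisible by $h$.
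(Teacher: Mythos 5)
Your forward direction and your derivation of the auxiliary formula for $[P_j,t^q]_h$ coincide with the paper's own argument (which is exactly the discussion preceding the proposition: the reduction to \eqref{rec}, the square system \eqref{si} whose unique solvability forces $d_n^*\neq 0$, Cramer's rule giving \eqref{form13}, and the decomposition $P_j=\sum_{m\geq 0}S_{m,h}(P_j)$). The gap is in the last step of your converse. Having defined $P_n^*$ by \eqref{form13} and checked that it is monic of degree $n$ with $[P_n^*,p]_h=0$ for every polynomial $p$ of degree less than $n$, you conclude that ``existence of such a sequence for every $n$ is precisely quasi-definiteness of $[\cdot,\cdot]_h$''. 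For a general symmetric bilinear form this is not a valid principle: quasi-definiteness means that all leading principal minors of the Gram matrix are nonzero, and since the minor of order $n+1$ equals $\prod_{k=0}^{n}[P_k^*,P_k^*]_h$ (the change of basis from the monomials to the monic $P_k^*$ is triangular with unit diagonal), you must in addition show $[P_n^*,P_n^*]_h\neq 0$ for every $n$; orthogonality to lower degrees alone does not give this, and a form admitting such monic polynomials with a vanishing ``norm'' is not quasi-definite (nor are its orthogonal polynomials unique).

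The missing step is available with the tools you already have. One option is to evaluate the norm directly: writing $n=Nk+m$ with $0\leq m\leq N-1$ and $k\geq 1$, orthogonality gives $[P_n^*,P_n^*]_h=[P_n^*,t^mh^k]_h$, and replacing the first column of \eqref{form13} by the values $[P_{n-i},t^mh^k]_h=(P_{n-i},t^mh^{k-1})_0$ annihilates every entry except the last, so that $[P_n^*,P_n^*]_h=(-1)^N\frac{d_{n+1}^*}{d_n^*}(P_{n-N},P_{n-N})_0\neq 0$ by quasi-definiteness of $(\cdot,\cdot)_0$ and the hypothesis on the $d_n^*$ (with the analogous truncated computation, and truncated determinants in \eqref{form13}, for $n<N$, a case your converse also leaves implicit). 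Alternatively, prove uniqueness: the argument you already use in the necessary direction shows that any monic degree-$n$ polynomial $[\cdot,\cdot]_h$-orthogonal to all lower degrees must have the form \eqref{rec}, and $d_n^*\neq 0$ then pins down its coefficients; uniqueness at degree $n+1$ forces $[P_n^*,P_n^*]_h\neq 0$, since otherwise $P_{n+1}^*+cP_n^*$ would be orthogonal to all polynomials of degree at most $n$ for every $c$. With either patch the converse is complete; note that the paper itself only spells out the necessity direction, so this is precisely the point where extra care is needed.
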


If we assume that $\mu$ is a nontrivial probability measure such that  $hd\mu=d\mu_0$, then  $[f,g]_{\mu}=\int_I fg d\mu$ is a positive  definite  bilinear form  and we can state the following corollary.
\begin{coro}
If  $\{R_n(t)\}_{n\geq0}$ is the sequence of monic polynomials orthogonal with respect to  $[\cdot,\cdot]_{\mu},$ then the sequence of polynomials $\{P^{*}_n(t)\}_{n\geq0 }$ satisfies the connection formula
$$h(t)P_n^{*}(t)=R_{n+N}(t)+B_{n+N-1}^{[n]}R_{n+N-1}(t)+\cdots + B_{n-N}^{[n]}R_{n-N}(t),$$
as well as
\begin{equation}\label{rel}
(P^{*}_{n+N}(t),R_k(t))_0=0, \ \ \ \ \text{if} \ \ \ \ k<n
\end{equation}
\end{coro}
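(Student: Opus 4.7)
The plan is to exploit a single algebraic identity linking the three bilinear forms in play: since $d\mu_0=h\,d\mu$, the defining relation $[hf,g]_h=[f,hg]_h=\int fg\,d\mu_0$ can be rewritten as
\[
(f,g)_0 \;=\; [hf,g]_h \;=\; [f,hg]_h \;=\; \int f(t)g(t)h(t)\,d\mu \;=\; [hf,g]_\mu \;=\; [f,hg]_\mu.
\]
This identity is the whole engine. It converts orthogonality with respect to $[\cdot,\cdot]_h$ into orthogonality with respect to $(\cdot,\cdot)_0$, at the cost of a degree shift by $N=\deg h$. I would establish this identity once at the start and then apply it twice.

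First I would prove the orthogonality relation \eqref{rel}. For $k<n$ one has $\deg(hR_k)=k+N\le n+N-1$, so by orthogonality of $P^{*}_{n+N}$ with respect to $[\cdot,\cdot]_h$ (the defining property guaranteed by \eqref{rec}) together with the identity above,
\[
(P^{*}_{n+N},R_k)_0 \;=\; [P^{*}_{n+N},\,hR_k]_h \;=\; 0,
\]
which is exactly \eqref{rel}. This argument is not special to the index $n+N$: the same computation shows $(P^{*}_m,R_k)_0=0$ whenever $k<m-N$, a fact I will reuse immediately.

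Next I would derive the connection formula. Since $h$ and $P_n^{*}$ are monic, $hP_n^{*}$ is a monic polynomial of degree $n+N$, so it expands in the basis $\{R_j\}$ as
\[
h(t)P_n^{*}(t) \;=\; R_{n+N}(t) + \sum_{j=0}^{n+N-1} B_j^{[n]}R_j(t).
\]
Because $\{R_j\}$ is orthogonal with respect to the positive definite form $[\cdot,\cdot]_\mu$, each coefficient is
\[
B_j^{[n]} \;=\; \frac{[hP_n^{*},R_j]_\mu}{[R_j,R_j]_\mu} \;=\; \frac{\int P_n^{*}(t)R_j(t)\,d\mu_0(t)}{[R_j,R_j]_\mu} \;=\; \frac{(P_n^{*},R_j)_0}{[R_j,R_j]_\mu},
\]
where I have used $d\mu_0=h\,d\mu$ to absorb the factor $h$ into the measure. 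By the shifted orthogonality of the previous paragraph (applied with $m=n$), the numerator vanishes whenever $j<n-N$, so only the coefficients $B_{n+N}^{[n]},B_{n+N-1}^{[n]},\ldots,B_{n-N}^{[n]}$ may be nonzero, yielding the claimed $(2N{+}1)$-term expansion.

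There is no substantive obstacle here; the only thing to be careful about is how the degree shift by $N$ propagates through the two bilinear forms, and writing the identity $(f,g)_0=[f,hg]_h=\int fgh\,d\mu$ out once makes both claims essentially immediate. A minor bookkeeping point is that the formula must be read with the convention $R_m\equiv 0$ for $m<0$ when $n<N$, so that the lower endpoint $R_{n-N}$ is understood correctly in the initial indices.
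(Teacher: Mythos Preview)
Your argument is correct, but it proceeds in the opposite order from the paper and avoids an intermediate object the paper relies on. The paper first expands $hP_n$ (not $hP_n^{*}$) in the basis $\{R_k\}$, using the identity $[hP_n,R_k]_\mu=(P_n,R_k)_0$ and the orthogonality of $P_n$ with respect to $(\cdot,\cdot)_0$ to see that only $R_n,\dots,R_{n+N}$ appear. It then substitutes the connection formula \eqref{rec}, $P_n^{*}=P_n+A_{n-1}^{[n]}P_{n-1}+\cdots+A_{n-N}^{[n]}P_{n-N}$, so that $hP_n^{*}$ becomes a sum of terms $hP_{n-j}$, each supported on $R_{n-j},\dots,R_{n-j+N}$; this yields the $(2N{+}1)$-term connection formula together with explicit expressions $B_{n+N-m}^{[n]}=\sum_k b_{N+n-m}^{[n-k]}A_{n-k}^{[n]}$. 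Only afterwards does the paper read off \eqref{rel} by comparing with the direct expansion $hP_n^{*}=\sum c_k^{[n]}R_k$, $c_k^{[n]}=(R_k,P_n^{*})_0/\|R_k\|_\mu^2$.

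You instead prove \eqref{rel} first, directly from $[\cdot,\cdot]_h$-orthogonality of $P_{n+N}^{*}$ via $(P_{n+N}^{*},R_k)_0=[P_{n+N}^{*},hR_k]_h$, and then use it to truncate the expansion of $hP_n^{*}$ in $\{R_j\}$. This is cleaner: it never touches the polynomials $P_n$ or the coefficients $A^{[n]}$ from \eqref{rec}, and it makes transparent that both claims are the \emph{same} orthogonality fact read two ways. What the paper's longer route buys is the explicit formula for the $B$-coefficients in terms of the $b$'s and $A$'s, which your argument does not produce. One small wording quibble: the orthogonality of $P_{n+N}^{*}$ you invoke is its \emph{definition}, not something ``guaranteed by \eqref{rec}''; \eqref{rec} is itself a consequence of that orthogonality.
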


\begin{proof}
Notice  that  $h(t)P_n(t)$ can be written as
$$h(t)P_n(t)=\sum^{n+N}_{k=0}b^{[n]}_k R_k(t)$$
where
\begin{eqnarray*}
b^{[n]}_k=\frac{[hP_n, R_k]_{\mu}}{\|R_k\|^2_{\mu}}=\frac{(P_n, R_k)_{0}}{\|R_k\|^2_{\mu}}=
\left\{\begin{array}{cc}
0,  \quad k<n,\\ & \\
\frac{(P_n, R_k)_{0}}{||R_k||^2_{\mu}},  \quad k\geq n.\\
\end{array}\right.
\end{eqnarray*}
In other words, we have that
$$h(t)P_n(t)=\sum^{n+N}_{k=n}b^{[n]}_k R_k(t).$$
Combining this with \eqref{rec} immediately yields
$$h(t)P_n^{*}(t)=R_{n+N}(t)+B_{n+N-1}^{[n]}R_{n+N-1}(t)+\cdots+B_{n-N}^{[n]}R_{n-N}(t),$$
where
$$B_{n+N-m}=\sum_{k=0}^{min\{m,N\}}b^{[n-k]}_{N+n-m}A_{n-k}^{[n]}.$$
At the same time, we have that
$$h(t)P_n^{*}(t)=\sum_{k=0}^{N+n} c^{[n]}_k R_k(t) \ \ \text{with}\ \ \ \ c^{[n]}_k=\frac{(R_k,P^{*}_n)_0}{\|R_k\|^2_{\mu}}. $$
According to \eqref{form13}, we get that $c^{[n]}_k=0$ for $0\leq k \leq n-N-1, $ and $c^{[n]}_{n-N} \neq 0$.
Finally, taking into account that the representation of $hP_n^{*}$ in terms of the sequence  $\{R_n\}_{n\geq0}$ is unique,
we conclude that \eqref{rel} holds.
\end{proof}

\begin{exa}
Let us assume that $ h(t)=t^N $, $d\mu_0=t^{\alpha+N}e^{-t}dt,$  and define $ (\cdot, \cdot)_0 $ as
$$(f,g)_{0}=\int_{0}^{\infty}f(t)g(t)t^{\alpha+N}e^{-t}dt \ \ \ \ \ \ \ \ \ \ \alpha>-1.$$
We know that the monic orthogonal polynomials associated with the above bilinear form are the Laguerre polynomials $\{L_{n} ^{\alpha+N} \}_{n\geq0}$ with parameter $\alpha+N$. Let us now take $d\mu=t^{\alpha}e^{-t}dt$. Then
\begin{equation}\label{exa1}
[f(t), g(t)]_{h}= \int_{0}^{\infty} f(t) g(t) t^{\alpha} e^{-t} dt + \sum_{k,j=0}^{N-1} M_{k,j} f^{(k)}(0) g^{(j)}(0).
\end{equation}
As a straightforward consequence, the sequence of polynomials orthogonal with respect to \eqref{exa1} can be written as
$$\tilde{L^{\alpha}_n}(t)=L_{n}^{\alpha+N} (t)+\sum_{k=1}^{N}A_{n-k}^{[n]}L_{n-k}^{\alpha+N}(t). $$
The above bilinear form with their orthogonal polynomials is very well known in the literature. Indeed, the diagonal case was introduced in \cite{koek}. Let us notice that, in particular, if $ M_{k,j}=0$  for $(k,j)\neq (0,0)$ we get the so called Laguerre-Krall orthogonal polynomials.
\end{exa}

The previous corollary shows a connection formula between the polynomials  $\{P^*_n(t)\}_{n\geq0}$ and the polynomials $\{R_n(t)\}_{n\geq0}$. We now focus on finding necessary and sufficient conditions for the existence of the sequence of polynomials  $\{P^*_n(t)\}_{n\geq0}$.
To this end, let us notice that in the case when $P_n^*$ exists it can be represented as
\begin{equation}\label{help1}
P^*_n(t)=R_n(t)+\sum_{k=0}^{n-1}\frac{[P_n^*,R_k]_{\mu}}{\Vert R_k\Vert_{\mu}^2}R_k(t).
\end{equation}
In order to get some information out of this relation, note that \eqref{d} can be rewritten as
$$[f,g]_{\mu}= [f,g]_h-\sum_{l,w}^p\sum_{i=0}^{\beta_{l}-1}\sum_{j=0}^{\beta_{w}-1}\lambda_{i,j,l,w}f^{(i)}(\alpha_l)g^{(j)}(\alpha_w).$$
Using the orthogonality $[P_n^*,R_k]_h=0$, $k=0,\dots, n-1,$ and substituting the latter formula in \eqref{help1} we arrive at
the following:
\begin{align}
P^*_n(t)&=R_n(t)+\sum_{k=0}^{n-1}\left[-\sum_{l,w=1}^p\sum_{i=0}^{\beta_{l}-1}\sum_{j=0}^{\beta_{w}-1}
\lambda_{i,j,l,w}\left(P^{*}_n\right)^{(i)}(\alpha_l)R_k^{(j)}(\alpha_w)\right]\frac{R_k(t)}{\|R_k\|_{\mu}^2}\\ \notag
&=R_n(t)-\sum_{l,w=1}^p\sum_{i=0}^{\beta_{l}-1}\sum_{j=0}^{\beta_{w}-1}\lambda_{i,j,l,w}\left(P^{*}_n\right)^{(i)}(\alpha_l)\left(\sum^{n-1}_{k=0}\frac{R_k^{(j)}(\alpha_w)R_k(t)}{\|R_k\|_{\mu}^2}\right)\\ \notag
&=R_n(t)-\sum^{p}_{l=1}\sum_{i=0}^{\beta_l-1}\left(P_n^*\right)^{(i)}(\alpha_l)D_{i,l}(t), \notag
\end{align}
where
$$D_{i,l}(t)=\sum^{p}_{w=1}\sum_{j=0}^{\beta_{w}-1}\lambda_{i,j,l,w}K_{n-1}^{(j,0)}(\alpha_w,t).$$
In particular, for  $1\leq q\leq p$  and $1\leq k \leq \beta_q-1$, we have
$$R_n^{(k)}(\alpha_q)=\left(P_n^*\right)^{(k)}(\alpha_q)+ \sum^{p}_{l=1}\sum_{i=0}^{\beta_l-1}\left(P_n^*\right)^{(i)}(\alpha_l)D_{i,l}(\alpha_q).$$

If we define the vector

\begin{eqnarray*}
v_j^{k}(q)=
\left\{\begin{array}{cc}
 [D^{(k)}_{0,j}(\alpha_j),\cdots, 1+D^{(k)}_{k,j}(\alpha_j),\cdots, D^{(k)}_{\beta_k-1,j}(\alpha_j)], \mbox{if} \quad {j=q},\\ & \\

[D^{(k)}_{0,j}(\alpha_j),\cdots, D^{(k)}_{k,j}(\alpha_j),\cdots, D^{(k)}_{\beta_k-1,j}(\alpha_j)], \mbox{if} \quad{j\neq q}\\
\end{array} \right.
\end{eqnarray*}
then for each $q=1, \dots, p,$ we have that

\begin{equation}
\mathbb{R}_q=
\begin{bmatrix}
R_n(\alpha_q)\\
\vdots\\
R_n^{(\beta_q-1)}(\alpha_q)
\end{bmatrix}
=
\begin{bmatrix}
v_1^0(q)& v_2^0(q)&\cdots &v_p^0(q)\\
\vdots  &\vdots   &        &\vdots\\
v_1^{\beta_q-1}(q)&v_2^{\beta_q-1}(q)& &v_p^{\beta_q-1}(q)
\end{bmatrix}_{\beta_q\times N}
\begin{bmatrix}
\left(P^*_n\right)^{(0)}(\alpha_1)\\
\vdots\\
\left(P^*_n\right)^{(\beta_1-1)}(\alpha_1)\\
\vdots\\
\left(P^*_n\right)^{(0)}(\alpha_p)\\
\vdots\\
\left(P^*_n\right)^{(\beta_p-1)}(\alpha_p)
\end{bmatrix}
=\mathbb{V}_q\mathbb{P}^*.
\end{equation}

Now we are in a position to state the following result.

\begin{pro}
Let   $\mu$ be  a nontrivial probability measure and $\{R_n (t)\}_{n\geq0}$ be the sequence of orthogonal polynomials with respect to the bilinear form  $[f,g]_{\mu}:=\int_I fg d\mu$. Let  $[\cdot,\cdot]_h$  be the symmetric bilinear form defined by

$$[f,g]_h=\int_I fgd\mu +\sum_{l,w}^p\sum_{i=0}^{\beta_{l}-1}\sum_{j=0}^{\beta_{w}-1}\lambda_{i,j,l,w}f^{(i)}(\alpha_l)g^{(j)}(\alpha_w)$$
with $\lambda_{i,j,l,w}=\lambda_{j,i,w,l}$. A necessary and sufficient condition for the existence of a sequence of monic polynomials  $\{P_n^*(t)\}_{n\geq0}$ orthogonal with respect to  $[\cdot,\cdot]_h$ is that the system of linear equations

\begin{equation}
\begin{bmatrix}
\mathbb{R}_1\\
\vdots\\
\mathbb{R}_p\\
\end{bmatrix}
=
\begin{bmatrix}
\mathbb{V}_1\\
\vdots\\
\mathbb{V}_p\\
\end{bmatrix}
\mathbb{P}^*
\end{equation}
has a unique solution.
\end{pro}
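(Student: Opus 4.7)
The plan is to turn the linear system in the statement into an existence-and-uniqueness question for $P_n^*$ via the representation
\[
P_n^*(t) = R_n(t) - \sum_{l=1}^p \sum_{i=0}^{\beta_l-1} (P_n^*)^{(i)}(\alpha_l)\, D_{i,l}(t)
\]
derived just above the proposition, together with the reproducing kernel identity $\int_I K_{n-1}^{(j,0)}(\alpha_w, t)\, R_k(t)\, d\mu(t) = R_k^{(j)}(\alpha_w)$ valid for $0 \le k \le n-1$, which controls what happens when $D_{i,l}$ is integrated against $R_k$.

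For the necessity direction, I would assume $\{P_n^*\}_{n\ge 0}$ exists. The derivation preceding the proposition already shows that its derivative vector $\mathbb{P}^*$ satisfies the stated system, so the system admits at least one solution. If it had two solutions $\mathbb{P}^*$ and $\mathbb{P}^* + \eta$ with $\eta\ne 0$, substituting $\mathbb{P}^* + \eta$ into the displayed formula would yield a polynomial distinct from $P_n^*$ but also monic of degree $n$; the sufficiency argument below will show that it is again orthogonal to $R_0,\dots,R_{n-1}$ with respect to $[\cdot,\cdot]_h$, contradicting the uniqueness of the monic orthogonal polynomial sequence.

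For sufficiency, let $\mathbb{P}^*$ denote the unique solution and define $P_n^*$ by plugging these scalars into the displayed formula. Since every $D_{i,l}$ has degree at most $n-1$, $P_n^*$ is monic of degree $n$. The ``$1+$'' appearing on the relevant diagonal entry of each block $\mathbb{V}_q$, combined with $\mathbb{R}_q = \mathbb{V}_q \mathbb{P}^*$, forces $(P_n^*)^{(k)}(\alpha_q)$ to coincide with the $(q,k)$ component of $\mathbb{P}^*$, so the definition is self-consistent. To check $[P_n^*, R_k]_h = 0$ for $0 \le k \le n-1$, I would split the inner product into its $\mu$-integral and boundary parts; applying the reproducing identity to each $D_{i,l}$ turns the $\mu$-integral into exactly the negative of the boundary part, giving the desired cancellation and hence orthogonality to the span of $R_0, \dots, R_{n-1}$.

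The main obstacle is keeping the four-index bookkeeping clean in the orthogonality check, in particular matching up indices in $\lambda_{i,j,l,w}$ with derivative orders evaluated at the correct nodes $\alpha_l, \alpha_w$; the symmetry $\lambda_{i,j,l,w} = \lambda_{j,i,w,l}$ ensures no sign mismatch, and the reproducing identity is what converts the kernel sums inside $D_{i,l}$ into boundary values. Everything else reduces either to a degree count or to a direct substitution in the definition of $D_{i,l}$.
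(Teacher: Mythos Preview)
Your approach is correct and is essentially the one the paper takes: the paper's ``proof'' is entirely contained in the derivation immediately preceding the proposition, which produces the representation $P_n^*(t)=R_n(t)-\sum_{l,i}(P_n^*)^{(i)}(\alpha_l)D_{i,l}(t)$ and then differentiates and evaluates to obtain the linear system; the proposition is then stated without further argument. Your write-up actually goes further than the paper by spelling out the sufficiency direction (constructing $P_n^*$ from a solution, verifying self-consistency via the ``$1+$'' diagonal entry, and checking $[P_n^*,R_k]_h=0$ through the reproducing identity), which the paper leaves implicit.

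One small point worth making explicit in your sufficiency argument: you construct a monic degree-$n$ polynomial orthogonal to lower degrees, but ``existence of the sequence'' in this context means quasi-definiteness, so you should also note uniqueness of $P_n^*$. This is immediate from the same circle of ideas: any monic degree-$n$ polynomial $Q$ with $[Q,R_k]_h=0$ for $k<n$ satisfies, by the paper's derivation, $Q(t)=R_n(t)-\sum_{l,i}Q^{(i)}(\alpha_l)D_{i,l}(t)$, so its derivative vector solves the system; uniqueness of the solution then forces $Q=P_n^*$. With that remark added, the bijection between solutions of the system and monic orthogonal polynomials of degree $n$ is complete, and both directions follow at once.
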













As  a next step, a natural question can be posed: when is the bilinear form $[\cdot, \cdot]_h$ positive definite? It is clear that if
we suppose that the matrix $S$ given by \eqref{d} is a positive semidefinite matrix and $\mu$ is a positive measure, then  $[\cdot,\cdot]_h$ is also positive definite (for some non-regular cases see \cite{D13}, \cite{DD11}).
Indeed, for any polynomial $q(t)$ we have that
$$[q,q]_h=\int_{I} q^2d\mu+v^TSv\geq 0 $$
where
\begin{equation*}
v^T=
\begin{bmatrix}
Q^{T}_1&\cdots & Q^{T}_p
\end{bmatrix}
\mathcal{A}^{-T}.
\end{equation*}

Alternatively, in order to analyze the positivity of  $[P_n^{*}, P^{*}_{n}]_h$, we need to consider two cases: when  $n=m+Nk$  and $n<N$.

\noindent {\textbf{Case 1.}}] If $n=m+Nk$ with $k\neq 0$ then

\begin{align}
[P^{*}_{n},t^{m}h^{k}]_h&=\int_{I}P^{*}_{n}t^{m}h^{k}d\mu=\int_{I}P^{*}_{n}t^{m}h^{k-1}d\mu_{0}\\
&=\frac{1}{d^*_{n}}
\begin{vmatrix}\notag
\int P_{n}t^mh^{k-1}d\mu_0 & [P_{n},1]_h&\cdots&[P_{n},t^{N-1}]_h\\
\vdots&\vdots& &\vdots\\
\int P_{n-N+1}t^mh^{k-1}d\mu_0 & [P_{n-N+1},1]_h&\cdots&[P_{n-N+1},t^{N-1}]_h\\
\int P_{n-N}t^mh^{k-1}d\mu_0 & [P_{n-N},1]_h&\cdot&[P_{n-N},t^{N-1}]_h\\
\end{vmatrix}.
\end{align}
But taking into account that $m+N(k-1)=n-N$, the above expression becomes
\begin{equation*}
=\frac{1}{d^*_{n}}
\begin{vmatrix}
0 & [P_{n},1]_h&\cdots&[P_{n},t^{N-1}]_h\\
\vdots&\vdots& &\vdots\\
0 & [P_{n-N+1},1]_h&\cdots&[P_{n-N+1},t^{N-1}]_h\\
\int P_{n-N}t^mh^{k-1}d\mu_0 & [P_{n-N},1]_h&\cdot&[P_{n-N},t^{N-1}]_h\\
\end{vmatrix}
.
\end{equation*}
Thus
$$[P_n^{*}(t),t^{m}h^{k}]_h=(-1) ^{N} \frac{d^*_{n+1}}{d^*_{n}}\int P_{n-N}t^mh^{k-1}d\mu_0= (-1)^{N}\frac{d^*_{n+1}}{d^*_{n}}\int P^2_{n-N}(t)d\mu_0.$$

\noindent{\textbf{Case 2.}} If $n<N$ then we have

\begin{equation}
[P^{*}_{n},t^{n}]_h=\frac{1}{d^*_{n}}
\begin{vmatrix}\notag
[P_n,t^n]_h & [P_{n},1]_h&\cdots&[P_{n},t^{n-1}]_h\\
\vdots&\vdots& &\vdots\\
[P_0,t^n] & [P_{0},1]_h&\cdot&[P_{0},t^{n-1}]_h\\
\end{vmatrix}
.
\end{equation}
Thus
\begin{eqnarray}
[P_n^{*}(t),t^{n}]_h=\left\{\begin{array}{cc}
 -\frac{d^*_{n+1}}{d^*_{n}}& \mbox{if}\quad n \ \ \ is \ \ odd,\\ & \\
\frac{d^*_{n+1}}{d^*_{n}},  & \mbox{if}\quad n\ \ \ is \ \  even
\end{array} \right.
\end{eqnarray}

As a summary we can state the following.

\begin{pro} Let  $(\cdot,\cdot)_{0}$ be a positive definite bilinear form. Then  $[\cdot,\cdot]_h$ is a positive definite bilinear form if and only if
$d_n^*\ne 0$ and
\begin{eqnarray}
\left\{\begin{array}{cc}
(-1)^{N}\frac{d^*_{n+1}}{d^*_{n}}>0 & \mbox{for}\quad n\geq N \ \\ & \\
\frac{d^*_{n+1}}{d^*_{n}}>0  & \mbox{for}\quad n<N, \ \ with \  n \ even \\ & \\
 \frac{d^*_{n+1}}{d^*_{n}} < 0& \mbox{for}\quad n<N \ \  with \ n \ \ odd,\\ & \\
\end{array} \right.
\end{eqnarray}

\end{pro}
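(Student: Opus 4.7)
The plan is to reduce positive definiteness of $[\cdot,\cdot]_h$ to the positivity of the squared norms $[P_n^*,P_n^*]_h$ for every $n\geq 0$, and then to evaluate these norms using the basis $\mathfrak{B}_h$ together with the computations of Case 1 and Case 2 that have just been performed immediately before the statement.

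First, I would invoke the standard link between the Gram matrix and monic orthogonal polynomials. Under the quasi-definiteness hypothesis $d_n^*\ne 0$ (which by the previous proposition guarantees the existence of the monic orthogonal sequence $\{P_n^*\}_{n\geq 0}$), the $(n+1)\times(n+1)$ leading principal minor $\det G_n$ of the Gram matrix of $[\cdot,\cdot]_h$ in the monomial basis factors as $\prod_{k=0}^{n}[P_k^*,P_k^*]_h$, because the upper triangular change-of-basis matrix from $\{t^k\}$ to $\{P_k^*\}$ has unit diagonal. Consequently, $[\cdot,\cdot]_h$ is positive definite exactly when $[P_n^*,P_n^*]_h>0$ for every $n\geq 0$; conversely, if $[\cdot,\cdot]_h$ is positive definite then every principal minor is nonzero, so in particular $d_n^*\ne 0$.

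Next, I would observe that in the basis $\mathfrak{B}_h$ there is precisely one monic element of each degree $n$: namely $t^m h^k$ with $n=m+Nk$, $0\leq m\leq N-1$, $k\geq 1$ when $n\geq N$, and simply $t^n$ when $n<N$ (since $h$ is monic, so is $t^m h^k$). Because the difference between $P_n^*$ and this monic representative has degree strictly less than $n$, orthogonality of $P_n^*$ with respect to $[\cdot,\cdot]_h$ gives $[P_n^*,P_n^*]_h=[P_n^*,t^m h^k]_h$ in the first range and $[P_n^*,P_n^*]_h=[P_n^*,t^n]_h$ in the second. Substituting the formulas of Case 1 and Case 2, and using that $\int_I P_{n-N}^2\, d\mu_0>0$ because $(\cdot,\cdot)_0$ is positive definite, the sign conditions listed in the statement turn out to be exactly the conditions for $[P_n^*,P_n^*]_h>0$ at each $n$, whence both directions of the equivalence. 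The only step that requires some care is the Gram matrix factorization $\det G_n=\prod_{k=0}^{n}[P_k^*,P_k^*]_h$ under quasi-definiteness; once that is in hand, the remainder is bookkeeping with the formulas of Cases 1 and 2 and the correct choice of monic basis element at each degree.
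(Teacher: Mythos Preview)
Your proposal is correct and follows exactly the route the paper takes: the paper computes $[P_n^*,t^m h^k]_h$ in Case~1 and $[P_n^*,t^n]_h$ in Case~2 precisely because these equal $[P_n^*,P_n^*]_h$ (via the monic-representative argument you spell out), and then declares the proposition ``as a summary'' of those computations. The only difference is that you make explicit the two standard facts the paper leaves tacit---the Gram determinant factorization $\det G_n=\prod_{k\le n}[P_k^*,P_k^*]_h$ and the identification $[P_n^*,P_n^*]_h=[P_n^*,t^m h^k]_h$---which is an improvement in exposition rather than a change of method.
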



\section{Matrix representation of the multiple Geronimus transformation}

Let us assume that $[\cdot,\cdot]_h$ is a positive definite bilinear form.  We define the symmetric matrix

\begin{equation*}
J^{*}=\left([h\hat{P^{*}}_n(t),\hat{P^{*}}_m(t)]_h\right)_{n,m=0}^{\infty},
\end{equation*}

where the corresponding orthonormal  polynomials $\{\hat{P^*_n}(t)\}_{n\geq0}$ are related to the monic ones in the following way:

$$\hat{P^*_n}(t)=\frac{1}{h^{*}_n}P_n^{*}(t), \ \ \ \ \ \ \ \ (h^{*}_n)^2=[P^{*}_n,P^{*}_n], \ \ \ h^{*}_n>0$$

For the classical Geronimus transformation (i.e. $h(t)=t$) there are two important facts concerning
the matrix factorizations  \cite{Dere}, \cite{Yoo}.
\begin{enumerate}
\item $J^*$  can be decomposed as $J^{*}=CC^{T}$ with $C$ a lower triangular matrix (Cholesky factorization).
\item If $P_n(0)\neq0$ for $n=0,1,2,\dots$ then  there exist $U$, an upper triangular matrix, and $L$, a lower triangular matrix,  such that
$$J_{mon}=UL\ \ \ \ \ \ \ \text{and} \ \ \ \ \ \ J^{*}_{mon}=LU, $$
where $J_{mon}$, $J_{mon}^*$ are monic Jacobi matrices associated with the corresponding monic orthogonal polynomials.
\end{enumerate}

Next, it is natural to ask if it is possible to extend these two results to the generalized Geronimus transformations analyzed in the previous sections. The answer to this question can be given by mimicking the idea of \cite{Dere}. Namely, from \eqref{rec} we know that the polynomials $P_n^{*}(t)$ can be written in terms of the monic orthogonal polynomials $P_n^(t)$, which are orthogonal with respect to $(\cdot,\cdot)_0$. From this we get

\begin{align*}
(P^*_n,P^*_m)_0&=A^{[n]}_{n}\sum_{i=0}^{N}A_{m-i}^{[m]}(P_{n},P_{m-i})_0+A^{[n]}_{n-1}\sum_{i=0}^{N}A_{m-i}^{[m]}(P_{n-1},P_{m-i})_0\cdots\\
&+A^{[n]}_{n-j}\sum_{i=0}^{N}A_{m-i}^{[m]}(P_{n-j},P_{m-i})_0+\cdots A^{[n]}_{n-N}\sum_{i=0}^{N}A_{m-i}^{[m]}(P_{n-N},P_{m-i})_0
\end{align*}

\begin{eqnarray}\label{prod}
=\left\{\begin{array}{cc}
\sum^{N}_{k=t}A_{n+t-k}^{[n+t]}A_{n+t-k}^{[n]} h^2_{n-k+t}, & \mbox{if}\quad m=n+t, \ 0\leq t\leq N,\\& \\
\sum^{N}_{k=t}A_{n-k}^{[n]}A_{n-k}^{[n-t]} h^2_{n-k}, & \mbox{if}\quad m=n-t,   \ 0\leq t\leq N,
\end{array} \right.
\end{eqnarray}
where $A_k^{[k]}=1$  and $A_m^{[k]}=0$ if $m<0$. Notice that $(P^*_n,P^*_m)_0$ is zero for $|n-m |\geq N$  and, therefore,
the matrix
\[
J^{*}=\left([h\hat{P^{*}}_n(t),\hat{P^{*}}_m(t)]_h\right)_{n,m=0}^{\infty}=
\left((\hat{P^{*}}_n,\hat{P^{*}}_m)_0\right)_{n,m=0}^{\infty}
\] is a $(2N+1)\times (2N+1)$ diagonal matrix.

\begin{pro}
Let us assume that $(\cdot,\cdot)_0$ and $[\cdot,\cdot]_h$ are  positive definite bilinear forms and  $\{P_n(t)\}_{n\geq0}$, $\{P^{*}_n(t)\}_{n\geq 0}$ are, respectively, the corresponding sequences of monic orthogonal polynomials. Then the symmetric matrix  $J^{*}$ corresponding
to $\hat{P^{*}}_n$ can be represented as
$$J^{*}=CC^{T},$$
where $C$ is a lower triangular matrix with positive diagonal entries,

\begin{equation*}
C=
\begin{bmatrix}
\frac{h_0}{h_0^{*}} & &  & & & &\\

\frac{A^{[1]}_0h_0}{h_1^{*}} & \frac{h_1}{h_1^{*}}& & & & &\\
\frac{A^{[2]}_0h_0}{h_2^{*}}&\frac{A_1^{[2]}h_1}{h_2^{*}}&\frac{h_2}{h_2^{*}} & & & &\\
\vdots&\vdots & \ddots & \ddots& & &\\
0&\frac{A_1^{[N+1]}h_1}{h_{N+1}^{*}}&\cdots&\frac{A_N^{[N+1]}h_N}{h_{N+1}^{*}}&\frac{h_{N+1}}{h_{N+1}^{*}}\\
 \vdots&\vdots & & &\ddots & \ddots
\end{bmatrix}.
\end{equation*}
\end{pro}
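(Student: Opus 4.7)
The plan is to exhibit $C$ directly as the change-of-basis matrix from the orthonormal system $\{\hat{P}_n^*\}$ to $\{\hat{P}_n\}$, and then read $J^* = CC^T$ off from the orthonormality of the $\hat{P}_n$ with respect to $(\cdot,\cdot)_0$.

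First, I would start with \eqref{rec}, divide through by $h_n^*$, and rewrite each monic $P_k$ on the right in terms of the corresponding orthonormal polynomial $\hat{P}_k = P_k/h_k$. With the convention $A_n^{[n]} = 1$ and $A_k^{[n]} = 0$ for $k < 0$, this yields
\begin{equation*}
\hat{P}_n^*(t) = \frac{1}{h_n^*}\sum_{k=\max(0,n-N)}^{n} A_k^{[n]} P_k(t) = \sum_{k=\max(0,n-N)}^{n} \frac{A_k^{[n]} h_k}{h_n^*}\,\hat{P}_k(t).
\end{equation*}
The coefficients $C_{n,k} := A_k^{[n]} h_k / h_n^*$ are exactly the entries of the matrix $C$ displayed in the statement; in particular the diagonal entries are $C_{n,n} = h_n/h_n^* > 0$ since both bilinear forms are positive definite, and $C$ is lower triangular with bandwidth $N$ because the sum vanishes outside $n-N \le k \le n$.

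Next I would identify $J^*_{n,m}$ with the Gram matrix of the $\hat{P}_n^*$ with respect to $(\cdot,\cdot)_0$. This uses the defining property \eqref{b} of $[\cdot,\cdot]_h$: for any polynomials $f,g$ one has $[hf,g]_h = \int_I fg\, d\mu_0 = (f,g)_0$, so
\begin{equation*}
J^*_{n,m} = [h\hat{P}_n^*, \hat{P}_m^*]_h = (\hat{P}_n^*, \hat{P}_m^*)_0.
\end{equation*}
Substituting the expansion from the first step and invoking the orthonormality $(\hat{P}_i,\hat{P}_j)_0 = \delta_{i,j}$ gives
\begin{equation*}
(\hat{P}_n^*, \hat{P}_m^*)_0 = \sum_{i,j} C_{n,i} C_{m,j}\, \delta_{i,j} = \sum_i C_{n,i} C_{m,i} = (CC^T)_{n,m},
\end{equation*}
which is the desired factorization $J^* = CC^T$.

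There is no real obstacle here; the content of the argument is only the bookkeeping of how the coefficients $A_k^{[n]}$ and the normalizations $h_k, h_n^*$ combine when passing between the monic and orthonormal bases. The only points requiring care are the boundary rows $n < N$ (handled automatically by the convention $A_k^{[n]} = 0$ for $k < 0$) and the observation that $C_{n,n} > 0$ so that $C$ qualifies as the Cholesky factor in the standard sense.
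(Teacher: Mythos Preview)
Your proof is correct and follows essentially the same approach as the paper: both use \eqref{rec} to expand $P_n^*$ in the $P_k$, the identity $J^*_{n,m}=[h\hat P_n^*,\hat P_m^*]_h=(\hat P_n^*,\hat P_m^*)_0$, and the orthogonality of the $P_k$ with respect to $(\cdot,\cdot)_0$. The paper carries this out more explicitly by first computing the entries $(P_n^*,P_m^*)_0$ via \eqref{prod} and then factoring the resulting matrix by inspection, whereas you recognize $C$ directly as the change-of-basis matrix and read off $J^*=CC^T$ from orthonormality; your packaging is tidier but the substance is the same.
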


\begin{proof}
According to the definition of $J^{*}$ we have
\begin{equation*}
J^{*}=
\begin{bmatrix}
\frac{1}{h_1^{*}}&0&\\
&\frac{1}{h_2^{*}}&\ddots\\
& &\ddots&\ddots
\end{bmatrix}
\begin{bmatrix}
[hP^{*}_{0},P^{*}_{0}]_h&[hP^{*}_{0},P^{*}_{1}]_h & \\
[hP^{*}_{1},P^{*}_{0}]_h&[hP^{*}_{1},P^{*}_{1}]_h&\ddots\\
&\ddots&\ddots
\end{bmatrix}
\begin{bmatrix}
\frac{1}{h_1^{*}}&0&\\
 &\frac{1}{h_2^{*}}&\ddots\\
 &  &\ddots&\ddots
\end{bmatrix}.
\end{equation*}
Taking into account the  definition of $[\cdot,\cdot]_h$ we have that $[hP_n^{*},P_m^{*}]_h=(P_n^{*},P_m^{*})_0$.
From   \eqref{prod} we get that
\begin{equation*}
\left[(P_n^{*},P_m^{*})_0\right]_{n,m=0}^{\infty}=
{\footnotesize
\begin{bmatrix}
h_0^2 &  A^{[1]}_0h^2_0 & A^{[2]}_0h^2_0 &\cdots& A^{[N]}_0h^2_0 & 0    \\
A^{[1]}_0h^2_0 &\sum_{k=0}^{N}(A^{[1]}_{1-k})^2h^2_{1-k}&\sum_{k=0}^{N}A^{[3]}_{3-k}A^{[2]}_{3-k}h^2_{3-k}&\cdots&\sum_{k=N-1}^{N}A^{[N]}_{N-k}A^{[2]}_{N-k}h^2_{N-k}&  A_1^{[N+1]}h_1^2\\
A^{[2]}_0h^2_0 &\sum_{k=1}^{N}A^{[2]}_{2-k}A^{[1]}_{2-k}h^2_{2-k}&\sum_{k=0}^{N}(A^{[2]}_{2-k})^2h^2_{2-k}&\cdots &\cdots &\cdots\\
\vdots&\vdots&\ddots& \cdots&\vdots&\vdots\\
\vdots&\vdots&\vdots& \ddots&\vdots&\vdots\\
A^{[N]}_0h^2_0 &\sum_{k=N-1}^{N}A^{[N]}_{N-k}A^{[1]}_{N-k}h^2_{N-k}&\cdots &\cdots&\sum_{k=0}^{N}(A^{[N]}_{N-k})^2h^2_{N-k}&\sum_{k=1}^{N}A^{[N+1]}_{N+1-k}A^{[N]}_{N+1-k}h^2_{N+1-k}\\
0 & A_1^{[N+1]}h_1^2& \ddots &\ddots&\ddots&\ddots\\
\end{bmatrix}
}.
\end{equation*}
It is easy to see that this can be written as
\begin{equation*}
\begin{bmatrix}
h_0 & &  & & & &\\
A^{[1]}_0h_0 & h_1& & & & &\\
A^{[2]}_0h_0&A_1^{[2]}h_1&h_2 & & & &\\
\vdots&\vdots & \ddots & \ddots& & &\\
0&A_1^{[N+1]}h_1&\cdots&A_N^{[N+1]}h_N&h_{N+1}\\
 \vdots&\vdots & & &\ddots & \ddots
\end{bmatrix}
\begin{bmatrix}
h_0 &A^{[1]}_0h_0 &A^{[2]}_0h_0&\cdots &\cdots & 0 &\cdots\\
 &  h_1& A_1^{[2]}h_1  &\cdots &\cdots&A_1^{[N+1]}h_1&\cdots\\
 & & h_2&\ddots& & \vdots &\\
& &  &  \ddots&\ddots &\vdots &\\
& &  & & \ddots &A_N^{[N+1]}h_N&\cdots\\
& & & & & h_{N+1}&\cdots\\
& & & & &        &\ddots
\end{bmatrix}.
\end{equation*}
If we set
\begin{equation}
C=
\begin{bmatrix}
\frac{1}{h_1^{*}}&0&\\
 0&\frac{1}{h_2^{*}}&\ddots\\
& &\ddots&\ddots
\end{bmatrix}
\begin{bmatrix}
h_0 & &  & & & &\\
A^{[1]}_0h_0 & h_1& & & & &\\
A^{[2]}_0h_0&A_1^{[2]}h_1&h_2 & & & &\\
\vdots&\vdots & \ddots & \ddots& & &\\
0&A_1^{[N+1]}h_1&\cdots&A_N^{[N+1]}h_N&h_{N+1}\\
 \vdots&\vdots & & &\ddots & \ddots
\end{bmatrix}
\end{equation}
then we get the desired result.  Also, notice that
\begin{align*}
 (h_{n+N}^{*})^2&=[P^{*}_{n+N}(t),P^{*}_{n+N}(t)]_h=[hP^{*}_{n}(t),P^{*}_{n+N}(t)]_h\\
&=(P^{*}_{n}(t),P^{*}_{n+N}(t))_0=\sum_{k=N}^{N}A_{n+N-k}^{[n+N]}A_{n+N-k}^{[n]}h^2_{n+N-k}\\
&=A_n^{[n+N]}h_n^2.
\end{align*}
Hence the diagonal entries of  $C$  can be given in terms of the coefficients $A^{[k]}_{n}$ as 
$$\frac{h_{n+N}}{h_{n+N}^{*}}=\frac{h_{n+N}}{\sqrt{A_n^{[n+N]}}h_n}.$$
In addition, if $m<N$ then
\begin{align*}
(h_m^*)^2&=[P_m^*,P_m^{*}]_h=[\sum_{k=0}^{m}A_k^{[m]}P_k,\sum_{j=0}^{m}A_k^{[m]}P_j]_h\\
&=\sum_{k=0}^{m}\sum_{j=0}^{m}A_k^{[m]}A_k^{[m]}[P_k,P_j]_h.
\end{align*}
From the above relation we can see that  $(h_m^*)^{2}$ is a combination of the free parameters given by the matrix $\hat{S}$
(see \eqref{c}).
\end{proof}

Let $L_{mon}$ be the matrix associated  with the recurrence formula given in \eqref{rec}, that is
\begin{equation}
L_{mon}=
\begin{bmatrix}
1 & &  & & & &\\
A^{[1]}_0 & 1& & & & &\\
A^{[2]}_0&A_1^{[2]}&1 & & & &\\
\vdots&\vdots & \ddots & \ddots& & &\\
A^{[N]}_{0}&A^{[N]}_{1}&\cdots &A^{[N]}_{N-1}&1\\
0&A_1^{[N+1]}&\cdots&\cdots&A_N^{[N+1]}&1\\
 \vdots&\vdots & & & &\ddots & \ddots
\end{bmatrix}.
\end{equation}
It is clear that the relation  \eqref{rec} reads as  $P^*=L_{mon}P,$ where  $P^{*}=(P^{*}_0(t),P^{*}_1(t)\cdots)^{T}$ and $P=(P_0(t),P_1(t)\cdots)^{T}$.
 On the other hand, we have
$$[hP_n,P_m^{*}]_h=(P_n,P_m^{*})_0=0, \text{\ \ for\ \ } m=0,\dots, n-1.$$
Then we can write
\begin{equation}
h(t) P_n(t)=\sum_{i=n}^{N+n}B^{[N+n]}_{i}P_i^{*}(t), \text{\ \ \ where\ \ } B_{n}^{[N+n]}\neq 0.
\end{equation}
Thus we can associate with the above relation the matrix
\begin{equation*}
U_{mon}=
\begin{bmatrix}
B^{[N]}_{0}&B^{[N]}_{1}&\cdots &\cdots &B^{[N]}_{N-1}& 1 & & &\\
  &B^{[N+1]}_{1}&\cdots& \cdots &B^{[N+1]}_{N-1}&B^{[N+1]}_{N}&1& &\\
 & &\ddots &  & &\ddots&\ddots&\ddots&\\
 & & &B^{[n+N]}_{n} & & &B^{[n+N]}_{n+N-2} &B^{[n+N]}_{n+N-1}&1\\
 &&&&\ddots&&&\ddots&\ddots&\ddots
\end{bmatrix}.
\end{equation*}
Here $hP=U_{mon}P^{*}$ where $P$ and $P^{*}$ are the vectors defined as above. Finally, we can state the following.
\begin{pro}\label{prop6} If $h(t)=\sum_{m=0}^{N}b_mt^m,$ then
 \begin{equation}\label{J}
h\left(J_{mon}\right)=\sum_{m=0}^{N}b_mJ_{mon}^m=U_{mon}L_{mon}
\end{equation}
as well as
 \begin{equation}\label{jm}
J_{mon}^{*}=L_{mon}U_{mon},
\end{equation}
where $J_{mon}^*$ is the band matrix corresponding to the monic Sobolev orthogonal polynomials generated
by $[\cdot,\cdot]_h$ (see \eqref{Srr}).
\end{pro}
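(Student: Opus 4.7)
The plan is to combine four identities that have already been established in the paper together with the fact that both $\{P_n\}_{n\geq 0}$ and $\{P^*_n\}_{n\geq 0}$ are linearly independent families in $\mathbb{P}$. Writing $P=(P_0,P_1,\dots)^T$ and $P^{*}=(P^*_0,P^*_1,\dots)^T$, the four ingredients I would draw on are: (i) $tP=J_{mon}P$, from the three-term recurrence for $\{P_n\}$; (ii) $h(t)P^{*}=J^{*}_{mon}P^{*}$, from the $(2N+1)$-band recurrence encoded in \eqref{Srr}; (iii) $P^{*}=L_{mon}P$, which is just the matrix form of \eqref{rec}; and (iv) $h(t)P=U_{mon}P^{*}$, which is the expansion $h(t)P_n(t)=\sum_{i=n}^{n+N}B^{[n+N]}_iP^*_i(t)$ introduced immediately before the statement.

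For \eqref{J}, I would iterate (i) to obtain $t^{m}P=J_{mon}^{m}P$ for every $m\geq 0$, sum over $m$ against the coefficients $b_m$ to reach $h(t)P=h(J_{mon})P$, and then combine (iv) with (iii) to get
\[
h(J_{mon})P=h(t)P=U_{mon}P^{*}=U_{mon}L_{mon}P.
\]
Since $\{P_n\}_{n\geq 0}$ is linearly independent and both $h(J_{mon})$ and $U_{mon}L_{mon}$ are banded semi-infinite matrices, matching the $n$-th row on either side produces a finite linear combination of the $P_k$ whose coefficients must agree; this forces $h(J_{mon})=U_{mon}L_{mon}$.

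For \eqref{jm}, I would apply $L_{mon}$ to both sides of (iv) and commute the scalar operation ``multiplication by $h(t)$'' past $L_{mon}$ (permissible because $L_{mon}$ has scalar entries), after which (iii) and (ii) give
\[
L_{mon}U_{mon}P^{*}=L_{mon}\bigl(h(t)P\bigr)=h(t)\bigl(L_{mon}P\bigr)=h(t)P^{*}=J^{*}_{mon}P^{*}.
\]
Linear independence of $\{P^*_n\}_{n\geq 0}$ then yields $J^{*}_{mon}=L_{mon}U_{mon}$ by the same row-by-row comparison as before.

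The only subtle point in the whole argument is the passage from $AP=BP$ to $A=B$ for two banded semi-infinite matrices acting on a column of linearly independent polynomials; each row is a \emph{finite} sum, so the standard linear-independence argument applies without convergence issues. Apart from this bookkeeping, the proof is a direct assembly of identities already collected in the preceding sections and requires no new orthogonality or Cramer-type computation.
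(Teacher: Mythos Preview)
Your proof is correct and follows essentially the same route as the paper: the paper also derives $h(t)P=h(J_{mon})P$ from $t^mP=J_{mon}^mP$, combines it with $hP=U_{mon}P^{*}=U_{mon}L_{mon}P$ to obtain \eqref{J}, and then uses $hP^{*}=L_{mon}hP=L_{mon}U_{mon}P^{*}$ together with $hP^{*}=J_{mon}^{*}P^{*}$ for \eqref{jm}. Your explicit remark that the passage from $AP=BP$ to $A=B$ is legitimate because each row involves only finitely many terms and the $P_n$ (resp.\ $P_n^{*}$) are linearly independent is exactly what the paper means by ``uniqueness of coefficients in recurrence relations.''
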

\begin{proof}
By definition, we have
\begin{equation*}
hP=U_{mon}P^{*}=U_{mon}L_{mon}P.
\end{equation*}
Next, observing that
\begin{equation*}
t^mP=J_{mon}t^{m-1}P=\cdots=J^{m}_{mon}P
\end{equation*}
we arrive at
\begin{equation*}
hP=\sum_{m=0}^{N}b_mt^mP=\sum_{m=0}^{N}b_mJ_{mon}^mP=h\left(J_{mon}\right)P.
\end{equation*}
From this relation and due to the uniqueness of coefficients in recurrence relations we obtain \eqref{J}. To prove \eqref{jm}, notice that

\begin{equation*}
hP^{*}=L_{mon}hP=L_{mon}U_{mon}P^{*},
\end{equation*}
Since we have
\begin{equation*}
hP^{*}=J^{*}_{mon}P^{*},
\end{equation*}
the relation \eqref{jm} is rather obvious.
\end{proof}

\section{Discrete Sobolev inner products as multiple Geronimus transformations}

In this section we summarize all the previous findings together with the results of \cite{DurVanAss} and present the main results of the present paper for a special class of polynomials.

Consider the discrete Sobolev inner product
\[
\langle f,g\rangle=\int f(t)g(t)d\mu(t)+\sum_{i=1}^{M}\sum_{j=0}^{M_i}\lambda_{i,j}f^{(j)}(\alpha_i)g^{(j)}(\alpha_i),
\]
where $f$, $g$ are polynomials and $\lambda_{i,j}$ are real numbers. We also suppose that the inner product $\langle \cdot,\cdot\rangle$ is symmetric, i. e.
$\langle f,g\rangle=\langle g,f\rangle$. Then the following result holds true.
\begin{teo}
The discrete Sobolev inner product $\langle \cdot,\cdot\rangle$ is a multiple Geronimus transformation of a bilinear form generated by
the measure $d\mu_0(t)=h(t)d\mu(t)$, where
\[
h(t)=\prod_{i=1}^{M}(t-\alpha_i)^{M_{i+1}},
\]
that is
\[
\langle f,g\rangle\equiv [f,g]_h.
\]
\end{teo}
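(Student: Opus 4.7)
The strategy is to realize $\langle\cdot,\cdot\rangle$ as the form $[\cdot,\cdot]_h$ coming from a multiple Geronimus transformation with $d\mu_0=h\,d\mu$, by exploiting the freedom in the parameters $s_{i,j}=[t^i,t^j]_h$ of \eqref{c}. The key tool is Proposition \ref{prop1}: with $\mu_0$ so defined, it gives
\[
[f,g]_h=\int fg\,d\mu + \begin{bmatrix} F_1^T & \cdots & F_p^T \end{bmatrix}\mathcal{A}^{-T}S\mathcal{A}^{-1}\begin{bmatrix} G_1\\ \vdots \\ G_p \end{bmatrix},
\]
where the entries of $S$ are in affine bijection (by subtracting the Hankel block of $\mu$) with the free parameters $s_{i,j}$. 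For the given $h(t)=\prod_{i=1}^M (t-\alpha_i)^{M_i+1}$, the zeros of $h$ are exactly $\alpha_1,\dots,\alpha_M$ with multiplicities $\beta_i=M_i+1$, so the entries of $F_i$ are $f(\alpha_i),f'(\alpha_i),\dots,f^{(M_i)}(\alpha_i)$ -- precisely the derivatives that enter the Sobolev sum. Thus both forms differ from $\int fg\,d\mu$ by a quadratic form in the same set of linear data of $f$ and $g$.

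It then remains to match the two quadratic forms. Let $D$ be the $N\times N$ diagonal matrix whose entries are the $\lambda_{i,j}$, ordered in the same way as the coordinates of the concatenated vector $\mathbf{F}=[F_1^T,\dots,F_p^T]^T$; the Sobolev correction equals $\mathbf{F}^T D\mathbf{G}$, so the identity $[f,g]_h=\langle f,g\rangle$ reduces to $\mathcal{A}^{-T}S\mathcal{A}^{-1}=D$, i.e.\ $S=\mathcal{A}^T D\mathcal{A}$. Since $\mathcal{A}$ is nonsingular (established just before Proposition \ref{prop1}) and $D$ is symmetric, this equation has a unique symmetric solution $S$, which in turn fixes the free parameters via $s_{i,j}=S_{i,j}+\int t^{i+j}\,d\mu$. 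Substituting back into Proposition \ref{prop1} yields the claimed equality for all polynomials $f,g$.

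The only real point to verify -- essentially a bookkeeping one -- is that the $s_{i,j}$ are genuinely free within the definition of a multiple Geronimus transformation, so that every symmetric $S$ is achievable and the congruence $S=\mathcal{A}^T D\mathcal{A}$ is solvable within the class. This is exactly the content of the remark following \eqref{c}. I expect this to be the most delicate step only in the sense of notational housekeeping; the substance of the argument is a direct application of Proposition \ref{prop1} with the explicit choice of parameters above.
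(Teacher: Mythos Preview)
Your proposal is correct and follows essentially the same route as the paper: invoke Proposition~\ref{prop1} to represent any multiple Geronimus transformation as $\int fg\,d\mu$ plus a quadratic form in the derivative data at the zeros of $h$, then use the freedom in the parameters $s_{i,j}$ (equivalently, in the symmetric matrix $S$) to match the given Sobolev matrix. The paper compresses this into a one-line appeal to Proposition~\ref{prop1} together with \cite[Section~3.1]{DurVanAss}, whereas you carry out the matching explicitly via the congruence $S=\mathcal{A}^{T}D\mathcal{A}$; your version is therefore more self-contained but not methodologically different.
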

\begin{proof}
This statement is a straightforward combination of Proposition \ref{prop1} and \cite[Section 3.1]{DurVanAss}.
\end{proof}
This result together with Proposition \ref{prop6} gives us an understanding of the structure of the band matrices associated with the recurrence
relations generated by Sobolev orthogonal polynomials.
\begin{teo}\label{th2}
Let us consider a discrete Sobolev inner product $\langle \cdot,\cdot\rangle$. Then the band matrix $J_{mon}^*$ generated by the recurrence relations for the corresponding orthogonal polynomials can be obtained as 
\begin{equation}\label{MatrGer}
h(J_{mon})=U_{mon}L_{mon}\mapsto J_{mon}^*=L_{mon}U_{mon},
\end{equation}
where $J_{mon}$ is the monic Jacobi matrix associated with $d\mu_0$.
\end{teo}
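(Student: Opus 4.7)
The plan is to reduce Theorem \ref{th2} to a direct combination of the preceding theorem and Proposition \ref{prop6}. First I would invoke the preceding theorem to rewrite the discrete Sobolev inner product $\langle\cdot,\cdot\rangle$ as the multiple Geronimus transformation $[\cdot,\cdot]_h$, with the polynomial $h$ and reference measure $d\mu_0=h\,d\mu$ as specified there. Since the two inner products coincide, their associated sequences of monic orthogonal polynomials coincide as well; in particular, the band matrix $J_{mon}^*$ produced by the recurrence for the Sobolev orthogonal polynomials is exactly the band matrix appearing in Proposition \ref{prop6}, while $J_{mon}$, the monic Jacobi matrix associated with $d\mu_0$, is the monic Jacobi matrix of the sequence $\{P_n(t)\}_{n\geq 0}$ studied there.

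Having made this identification, I would then quote Proposition \ref{prop6} essentially verbatim: it provides the factorization $h(J_{mon})=U_{mon}L_{mon}$ and the swapped product $J_{mon}^*=L_{mon}U_{mon}$, where $L_{mon}$ is the lower triangular matrix encoding the expansion \eqref{rec} of $P_n^*$ in the basis $\{P_j\}$ and $U_{mon}$ is the upper triangular matrix encoding the dual expansion of $hP_n$ in the basis $\{P_j^*\}$. These are precisely the two identities \eqref{MatrGer}, so the assertion of Theorem \ref{th2} follows without any further computation.

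There is no genuine mathematical obstacle here: the theorem is really a repackaging of earlier results. The only point that deserves explicit attention is to check that the hypotheses of Proposition \ref{prop6} are in force, namely that $[\cdot,\cdot]_h$ is quasi-definite so that $\{P_n^*\}$, and hence $L_{mon}$ and $U_{mon}$, are well defined. In the present setting this is built into the assumption that the discrete Sobolev inner product admits a full sequence of orthogonal polynomials satisfying a $(2N+1)$-band recurrence of the form \eqref{Srr}. Once this is recorded, the proof is simply the assembly of the two ingredients above, and the theorem appears as an immediate corollary.
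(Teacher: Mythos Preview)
Your proposal is correct and matches the paper's own treatment: the paper presents Theorem \ref{th2} without a separate proof, indicating just before the statement that it follows from the preceding theorem together with Proposition \ref{prop6}, which is precisely the two-step reduction you describe. Your additional remark about verifying quasi-definiteness is a sensible clarification but does not diverge from the paper's approach.
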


Let $p(t) = \sum _{j=0}^{n} \sum_{k=0}^{N-1} a_{k,j} t^{k} h^{j} (t)$ be a polynomial of degree $nN + m$, $0\leq m < N,$  where we assume $a_{k,n}= 0$ if  $k> m.$ For $0 \leq k < N-1, $ let consider the linear operator $R_{k,h}(p)(t)=  \sum_{j=0}^{n} a_{k,j} t^{j},$ i.e. it takes from $p$ the terms of the form  $a_{k,j} t^{k} h^{j}(t)$ and then removes the common factor $t^{k}$ and changes $h(t)$ to $t.$  Notice that in such a way $p(t)= \sum_{k=0}^{N-1} t^{k} R_{k,h} (p) (h(t))$ (see \cite{DurVanAss}).\\

Using the previous notation, Theorem \ref{th2} can be seen as a result for matrix orthogonal polynomials due to \cite{DurVanAss}. Indeed,
the matrix $h(J_{mon})$ generates matrix polynomials
\[
P_n(t)= \begin{pmatrix}
      R_{0,h}(p_{nN})(t)&\dots&R_{N-1,h}(p_{nN})(t) \\
      R_{1,h}(p_{nN+1})(t)&\dots&R_{N-1,h}(p_{nN+1})(t) \\
      \vdots & & \vdots\\
      R_{N-1,h}(p_{nN+N-1})(t)&\dots&R_{N-1,h}(p_{nN+N-1})(t) \\
   \end{pmatrix}
\]
orthogonal with respect to the measure $dM_0(h^{-1})$, where
\[
d{M}_0(t) = \begin{pmatrix}
      d\mu_0(t) &td\mu_0(t)&\dots&t^{N-1}d\mu_0(t) \\
      td\mu_0(t) &t^2d\mu_0(t)&\dots&t^{N}d\mu_0(t)\\
      t^2d\mu_0(t) &t^3d\mu_0(t)&\dots&t^{N+1}d\mu_0(t)\\
      \vdots& \vdots& &\vdots \\
      t^{N-1}d\mu_0(t) &t^Nd\mu_0(t)&\dots&t^{2N-2}d\mu_0(t) \\
   \end{pmatrix}
\]
and $p_n$ are monic polynomials orthogonal with respect to the measure $d\mu_0$.
At the same time, the matrix $J_{mon}^*$ corresponds to Sobolev type  orthogonal polynomials which, in turn, yield a sequence of matrix
orthogonal polynomials with respect to the measure \cite{DurVanAss}
\begin{equation}\label{MgerTr}
dM(h^{-1}(t))+L\delta(t),
\end{equation}
where $\delta (t)$ is the Dirac delta at $t=0$, $dM$ has the form
\[
dM(t)=\begin{pmatrix}
      d\mu (t) &td\mu(t)&\dots&t^{N-1}d\mu(t) \\
      td\mu (t) &t^2d\mu(t)&\dots&t^{N}d\mu(t)\\
      t^2d\mu (t) &t^3d\mu(t)&\dots&t^{N+1}d\mu(t)\\
      \vdots& \vdots& &\vdots \\
      t^{N-1}d\mu (t) &t^Nd\mu(t)&\dots&t^{2N-2}d\mu(t) \\
   \end{pmatrix},
\]
and $L$ is the matrix
\[
\sum_{i=1}^{M}\sum_{j=0}^{M_i}\lambda_{i,j}{\bf L}(i,j)
\]
with ${\bf L}(i,j)$ the $N\times N$ matrix
\[
{\bf L}(i,j)= \begin{pmatrix}
      0 \\
      \vdots \\
      0 \\
       j! \\
      \vdots \\
         \frac{k!}{(k-j)!}c_i^{k-j} \\
      \vdots \\
       \frac{(N-1)!}{(N-1-j)!}c_i^{N-1-j}\\
   \end{pmatrix}
    \begin{pmatrix}
      0 \dots 0&j!\dots \frac{k!}{(k-j)!}c_i^{k-j}\dots  \frac{(N-1)!}{(N-1-j)!}c_i^{N-1-j}\\
   \end{pmatrix}.
\]

In other words, we see that, according to \eqref{MatrGer}, the matrix measure \eqref{MgerTr} is actually a simple matrix Geronimus transformation of the matrix measure $dM_0$. In fact, introducing $y=h^{-1} (t)$ we see that the spectral transformation
\[
dM_0(y)=ydM(y)\mapsto dM(y)+L\delta (y)
\]
corresponds to one step of the block $LR$-algorithm based on the block $UL$-factorization
\[
h(J_{mon})=U_{mon}L_{mon}\mapsto J_{mon}^*=L_{mon}U_{mon}.
\]
Thus, a multiple Geronimus transformation is a simple Geronimus transformation for matrix inner products. So,
all our findings can be considered from the point of view of Darboux transformations for matrix orthogonal polynomials, which will be carefully analyzed in a forthcoming paper.

\vspace{10 mm}

\noindent{\bf Acknowledgements.}
Maxim Derevyagin acknowledges the support of FWO Flanders project G.0934.13 and
Belgian Interuniversity Attraction Pole P07/18. The work of Francisco Marcell\'an has been supported by Direcci\'on General de Investigaci\'on, Desarrollo e Innovaci\'on, Ministerio de Econom\'ia y Competitividad  of Spain, grant MTM2012-36732-C03-01. 


\renewcommand{\refname}{Bibliography}

\end{document}